\newtheorem{thm}{Theorem}[section]
\newtheorem{pro}[thm]{Proposition}
\newtheorem{lem}[thm]{Lemma}
\newtheorem{conj}[thm]{Conjecture}
\theoremstyle{definition}
\newtheorem{rmk}[thm]{Remark}
\newtheorem{ex}[thm]{Example}
\newcommand\Hom{\operatorname{Hom}}
\def\Inn{\operatorname{Inn}}
\def\Conj{\operatorname{Conj}}
\def\Core{\operatorname{Core}}
\title{Enhancements of link colorings via idempotents of quandle rings}
\author{Mohamed Elhamdadi } 
\address{Department of Mathematics, 
University of South Florida, Tampa, FL 33620, U.S.A.} 
\email{emohamed@math.usf.edu} 
\author{Brandon Nunez}
\address{Department of Mathematics, 
University of South Florida, Tampa, FL 33620, U.S.A.} 
\email{nunez7@usf.edu} 
\author{Mahender Singh}
\address{Department of Mathematical Sciences,
Indian Institute of Science Education and Research (IISER) Mohali, Sector 81, SAS Nagar, P O Manauli, Punjab 140306, India} 
\email{mahender@iisermohali.ac.in}
\subjclass[2020]{Primary 17D99; Secondary 57M27, 16S34, 20N02}
\keywords{Enhancement, idempotent, link coloring, Peirce spectrum, quandle covering, quandle ring}
\begin{document}
\maketitle

\begin{abstract}
We show that quandle rings and their idempotents lead to proper enhancements of the well-known quandle coloring invariant of links in the 3-space. We give explicit examples to show that the new invariants are also stronger than the $\Hom$ quandle invariant when the coloring quandles are medial. We provide computer assisted computations of idempotents for all quandles of order less than six, and also determine  quandles for which the set of all idempotents is itself a quandle. The data supports our conjecture about triviality of idempotents of integral quandle rings of finite latin quandles. We also determine Peirce spectra for complex quandle algebras of some small order quandles.

\end{abstract}

\section{Introduction}
Quandle rings are analogues of group rings and were introduced in \cite{MR3977818} as an attempt to bring ring theoretic techniques to quandle theory. Quandle rings of non-trivial quandles are non-associative, and it has been proved \cite{MR3915329} that these rings are not even power-associative, which is the other extreme of the spectrum of associativity. Moreover, it has been shown that quandle rings of non-trivial quandles over rings of characteristic other than two and three cannot be alternative or Jordan algebras  \cite{BPS1}. The isomorphism problem for quandle rings has been explored, and examples of non-isomorphic finite quandles with isomorphic quandle rings have been given in \cite{MR3915329}. Moreover, it has been proved that if two finite quandles admit doubly transitive actions by their inner automorphisms and have isomorphic quandle rings, then the quandles have the same number of orbits of each size. A cohomological study of quandle rings has also been initiated in a recent work \cite{EMSZ2022}, where a complete characterization of derivations for quandle algebras of dihedral quandles over fields of characteristic zero has been given. Zero-divisors in quandle rings have been investigated in \cite{BPS1} using the idea of orderability of quandles. It has been proved that quandle rings of left or right orderable quandles which are semi-latin have no zero-divisors. As a special case, it follows that quandle rings of free quandles have no zero-divisors. It is well-known that units play a fundamental role in the structure theory of group rings. In contrast, idempotents are the most natural objects in quandle rings since each quandle element is, by definition, an idempotent of the quandle ring. Exploiting the idea of a quandle covering as developed in \cite{MR1954330, MR3205568}, the complete set of idempotents of the quandle ring of an involutory covering, when the quandle ring of the base has only trivial idempotents, has been determined in \cite{ENSS2022}. Interestingly, the set of all idempotents in this case is itself a quandle with respect to the ring multiplication. We shall see that this phenomenon occurs more often in mod 2 quandle rings. In addition, integral quandle rings of free quandles have been shown to admit only trivial idempotents \cite{ENSS2022}, which gives an infinite family of quandles with this property.
\par

The purpose of this computational paper is two fold. The first being new applications of quandle rings to knot theory, and the second being a deeper exploration of idempotents in quandle rings. Section \ref{prelim} gives a quick review of quandle rings. In Section \ref{idempotents of quandle rings}, we  make some basic observations about idempotents of quandle rings which will be used in later sections. In sections \ref{sec 4} and \ref{sec 5},  we prove that quandle rings and their sets of idempotents lead to two proper enhancements of the well-known quandle coloring invariant of links. Explicit examples are given to show that the new invariants are even stronger than the $\Hom$ quandle invariant when the coloring quandles are medial (Theorem \ref{ring enhancement} and Theorem \ref{idempotent enhancement}). In Section \ref{computer computations}, we present computer assisted calculations of all idempotents for integral as well as mod 2 quandle rings of all quandles of order less than six, and also determine quandles for which the set of all idempotents is itself a quandle. The data also supports our conjecture \cite[Conjecture 3.10]{ENSS2022} about triviality of idempotents of integral quandle rings of finite latin quandles, and also suggests that any non-zero idempotent of an integral quandle ring has augmentation value 1 (Conjecture \ref{augmentation one conjecture}).  Finally, in Section \ref{sec Peirce spectra}, we discuss Peirce spectra for complex quandle algebras of quandles of order three. We prove that the Peirce spectrum of a complex quandle algebra can be as large as possible when the underlying quandle is non-latin, and that the Peirce spectrum for the dihedral quandle of order three is $\{0, 1, -1 \}$ (Theorem \ref{Peirce spectra 3 order quandle}). Our computer assisted computations suggest that the Peirce spectra is the same for each dihedral quandle of odd order (Conjecture \ref{Peirce spectra conjecture}).

\medskip

\section{Review of quandle rings}\label{prelim}
A {\it quandle} is a non-empty set $X$ with a binary operation $*$ such that each map $S_y: X \rightarrow X$, given by $S_y(x) =x*y$ for $x \in X$, is an automorphism of $X$ fixing $y$. Note that right multiplications being homomorphisms of $X$ is equivalent to the right distributivity axiom $$(x*y)*z=(x*z)*(y*z)$$ for all $x, y, z \in X$. It is well-known that quandle axioms are simply algebraic formulations of the three Reidemeister moves of planar diagrams of knots and links in the 3-space \cite{MR2628474, MR0672410}, which therefore makes them suitable for defining invariants of knots and links.  A quandle $X$ is {\it trivial} if $x*y=x$ for all $x, y\in X$. The group $\Inn(X)= \langle S_x \mid x \in X \rangle$ generated by all right multiplications is called the {\it inner automorphism group} of $X$.
\par

Conjugacy classes in groups are a rich source of quandles. Each group $G$ can be turned into a quandle $\Conj(G)$ with the binary operation $x*y= yx y^{-1}$, and called the {\it conjugation quandle}. Similarly, every group $G$ can be turned into a quandle $\Core(G)$ by setting $x*y=yx^{-1}y$, and referred as the {\it core quandle}. In particular, the cyclic group of order $n> 2$ gives the dihedral quandle $R_n$ of order $n$. 
\par
A quandle $X$ is called {\it latin} if each left multiplication $L_x: X \to X$, given by $L_x(y)=x*y$ for $y \in X$, is a bijection of $X$. For example, dihedral quandles of odd orders and commutative quandles are latin. A quandle $X$ is called {\it medial} if $$(x*y)*(z*w)=(x*z)*(y*w)$$ for all $x, y, z, w \in X$. The identity stems from the fact that these are precisely the quandles for which the map $X \times X \to X$ given by $(x, y) \mapsto x*y$ is a quandle homomorphism, where $X \times X$ is equipped with the usual product structure. A quandle is {\it involutory} if each right multiplication is of order two. For example, the core quandle of any group is medial as well as involutory.
\par

Let $(X, *)$ be a quandle and $\mathbf{k}$ an integral domain with unity {\bf 1}.  Let $e_x$ be a unique symbol corresponding to each $x \in X$. Let $\mathbf{k}[X]$ be the set of all formal expressions of the form $\sum_{x \in X }  \alpha_x e_x$, where $\alpha_x \in \mathbf{k}$ such that all but finitely many $\alpha_x=0$.  The set $\mathbf{k}[X]$ has a free $\mathbf{k}$-module structure with basis $\{e_x \mid x \in X \}$ and admits a product given by 
 $$ \Big( \sum_{x \in X }  \alpha_x e_x \Big) \Big( \sum_{ y \in X }  \beta_y e_y \Big)
 =   \sum_{x, y \in X } \alpha_x \beta_y e_{x * y},$$
where $x, y \in X$ and $\alpha_x, \beta_y \in \mathbf{k}$. This turns $\mathbf{k}[X]$ into a ring (rather a $\mathbf{k}$-algebra when  $\mathbf{k}$ is a field) called the {\it quandle ring} of $X$ with coefficients in $\mathbf{k}$. Even if the coefficient ring $\mathbf{k}$ is associative, the quandle ring $\mathbf{k}[X]$ is non-associative when $X$ is a non-trivial quandle. The quandle $X$ can be identified as a subset of $\mathbf{k}[X]$ via the natural map $x \mapsto {\bf 1}e_x=e_x$. Throughout the text, we assume that the coefficient ring $\mathbf{k}$ is an integral domain with unity, unless mentioned otherwise.
\medskip

\section{Idempotents of quandle rings}\label{idempotents of quandle rings}
Let $X$ be a quandle and $\mathbf{k}$ an integral domain with unity.  A non-zero element $u \in \mathbf{k}[X]$ is called an {\it idempotent} if $u^2=u$. We denote the set of all idempotents of $\mathbf{k}[X]$ by $\mathcal{I}(\mathbf{k}[X])$. Clearly,  elements of the basis $\{e_x \mid x \in X\}$ are idempotents of $\mathbf{k}[X]$ and referred as {\it trivial idempotents}.  A non-trivial idempotent  is an element of $\mathbf{k}[X]$ that is not of the form $e_x$ for any $x \in X$. Since the construction of a quandle ring is functorial, a quandle homomorphism $X \rightarrow Y$ induces a ring homomorphism $\mathbf{k}[X] \rightarrow \mathbf{k}[Y]$, which therefore maps $ \mathcal{I} (\mathbf{k}[X] )$ into $\mathcal{I} (\mathbf{k}[Y] )$.  Let  $\varepsilon: \mathbf{k}[X] \rightarrow \mathbf{k}$ given by $$\varepsilon \Big(\sum_{x \in X }  \alpha_x e_x \Big)=\sum_{x \in X }  \alpha_x$$
be the {\it augmentation map}. Since $\varepsilon$ is a ring homomorphism, it follows that each idempotent of $\mathbf{k}[X]$ has augmentation value 0 or 1.
\par

We begin with the following basic observation.

\begin{pro}\label{prop commutative quandle}
If $X$ is a commutative quandle, then
$$\mathcal{I}(\mathbb{Z}_2[X])= \Big\{ \sum_{x \in F} e_x  \mid F~\textrm{is a finite subset of}~X\Big\}.$$
In particular, if $X$ is finite, then $|\mathcal{I}(\mathbb{Z}_2[X])|=2^{|X|}-1$.
\end{pro}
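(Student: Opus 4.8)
The plan is to reduce the statement to a short combinatorial parity computation. First I would note that over $\mathbb{Z}_2$ every non-zero element of $\mathbb{Z}_2[X]$ already has the form $u_F := \sum_{x \in F} e_x$, where $F$ is its (finite, non-empty) support; so the inclusion $\supseteq$ of the proposition is the claim that $u_F^2 = u_F$ for every such $F$ when $X$ is commutative, and the inclusion $\subseteq$ is automatic once this is known. The cardinality statement then follows immediately: there are exactly $2^{|X|} - 1$ non-empty subsets of a finite set $X$, and idempotents are by definition non-zero, so none of these is excluded and there are no others.

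To establish $u_F^2 = u_F$, I would expand
$$u_F^2 = \sum_{(x,y) \in F \times F} e_{x*y}$$
and determine, for each fixed $z \in X$, the coefficient of $e_z$, which is $|S_z| \bmod 2$ with $S_z := \{(x,y) \in F \times F : x*y = z\}$. The one idea needed is the flip involution $\tau(x,y) = (y,x)$ of $F \times F$: since $X$ is commutative, $x*y = y*x$, so $\tau$ carries $S_z$ onto itself, whence $|S_z|$ has the parity of the number of $\tau$-fixed points of $S_z$. A $\tau$-fixed point is a diagonal pair $(x,x)$, and the quandle axiom $x*x = x$ forces $x = z$ for such a pair to lie in $S_z$; thus $S_z$ has a unique fixed point $(z,z)$ if $z \in F$, and none otherwise. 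Hence the coefficient of $e_z$ in $u_F^2$ is $1$ when $z \in F$ and $0$ when $z \notin F$, i.e. $u_F^2 = \sum_{z \in F} e_z = u_F$.

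This argument is short, so rather than a serious obstacle there is just one point that must be stated with care: that $\tau$ really does preserve each $S_z$ (this is precisely where commutativity is used, and where the argument breaks for a general quandle --- e.g. for the trivial quandle on two elements, which is not commutative, one has $(e_a + e_b)^2 = 0$), and that the only possible diagonal pair contributing to $S_z$ is $(z,z)$, by idempotency of quandle elements. None of the earlier apparatus (functoriality, the augmentation map) is required; as a consistency check one may note $\varepsilon(u_F) = |F| \bmod 2 \in \{0,1\}$, in agreement with the fact that augmentation values of idempotents lie in $\{0,1\}$.
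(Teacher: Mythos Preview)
Your argument is correct and is essentially the paper's ``direct check'' made explicit: the involution $(x,y)\mapsto(y,x)$ is precisely the combinatorial shadow of the paper's observation that commutativity of $X$ makes $\mathbb{Z}_2[X]$ a commutative ring of characteristic~$2$, so that squaring is additive and $u_F^2=\sum_{x\in F}e_x^2=\sum_{x\in F}e_x=u_F$. Your remark that the inclusion $\subseteq$ is automatic (every nonzero element of $\mathbb{Z}_2[X]$ already has the form $u_F$) is a useful point that the paper leaves implicit.
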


\begin{proof}
Since $X$ is commutative, it follows that $\mathbb{Z}_2[X]$ is a commutative ring. If $F$ is a finite subset of $X$, then a direct check shows that the element $u= \sum_{x \in F} e_x$ satisfy $u=u^2$. The second assertion follows immediately. 
\end{proof}

The notion of a quandle covering \cite{MR1954330, MR3205568} can be used to identify idempotents for a large family of quandle rings. Recall that a surjective quandle homomorphism $p: X  \to Y$ is called a {\it quandle covering} if $S_x=S_{x'}$ whenever $p(x) = p(x')$ for any $x, x' \in X$.  For example, a surjective group homomorphism $p: G \to H$ yields a quandle covering $\Conj(G) \to \Conj(H)$ if and only if $\ker(p)$ is a central subgroup of $G$. Similarly, a surjective group homomorphism $p: G\to H$ yields a quandle covering $\Core(G) \to \Core(H)$ if and only if $\ker(p)$ is a central subgroup of  $G$ of exponent two.
\par

The following result  \cite[Corollary 4.8]{ENSS2022} has been proved using the fact that the modulo $n$ reduction $R_{2n} \to R_n$ is a quandle covering.

\begin{pro}\label{covering idempotents}
Let $n \ge 3$ be an odd integer. If $\mathbf{k}[R_{n}]$ has only trivial idempotents,  then
\begin{small}
$$\mathcal{I}(\mathbf{k}[R_{2n}]) =\Big\{ \big(\beta e_j + (1-\beta)e_{n+j}\big)+ \sum_{i=0}^{n-1} \alpha_i \big(e_i-e_{n+i} + e_{2j-i}- e_{n+2j-i}\big) ~\bigl\vert ~ 0 \le j \le n-1 ~\textrm{and~} ~~ \alpha_i, \beta \in \mathbf{k} \Big\}.$$
\end{small}
 \end{pro}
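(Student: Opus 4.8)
The plan is to exploit the quandle covering $p\colon R_{2n}\to R_n$ given by reduction modulo $n$, together with the hypothesis that $\mathbf{k}[R_n]$ has only trivial idempotents, and then directly solve the idempotent equation $u^2=u$ on the fibers. First I would recall the structure of $R_{2n}$: writing its elements as $0,1,\dots,2n-1$, the right multiplication is $S_y(x)=2y-x \pmod{2n}$, and the covering $p$ sends both $i$ and $n+i$ to $i\bmod n$, with $S_i=S_{n+i}$ as maps on $R_{2n}$. Given an idempotent $u=\sum_{k=0}^{2n-1}\lambda_k e_k\in\mathbf{k}[R_{2n}]$, its image $\varepsilon_*(u)$ under the induced ring map $\mathbf{k}[R_{2n}]\to\mathbf{k}[R_n]$ is an idempotent of $\mathbf{k}[R_n]$ (or zero), hence by hypothesis either $0$ or of the form $e_j$ for a unique $j\in\{0,\dots,n-1\}$. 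The augmentation $\varepsilon(u)\in\{0,1\}$; the case $\varepsilon(u)=0$ should be disposed of quickly (it forces the image in $\mathbf{k}[R_n]$ to have augmentation $0$, hence to be $0$, and then a short argument over the integral domain $\mathbf{k}$ rules out a nonzero such $u$, so I would focus on $\varepsilon(u)=1$, where the image is some $e_j$).

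Next I would translate "image under $p_*$ is $e_j$" into coordinates: grouping $u=\sum_{i=0}^{n-1}(\lambda_i e_i+\lambda_{n+i}e_{n+i})$, the condition says $\lambda_i+\lambda_{n+i}=\delta_{ij}$ for each $i$. So set $\alpha_i:=\lambda_i$ for $i\ne j$ and $\beta:=\lambda_j$, giving $\lambda_{n+i}=-\alpha_i$ for $i\ne j$ and $\lambda_{n+j}=1-\beta$; this already matches the shape of the asserted formula once one checks that the "diamond" combination $e_i-e_{n+i}+e_{2j-i}-e_{n+2j-i}$ is exactly what the $\alpha_i$-terms contribute after symmetrization (note $i\mapsto 2j-i$ is the involution $S_j$ on $R_n$-indices, so terms come in pairs). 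The main computation is then to impose $u^2=u$ and show it is \emph{equivalent} to no further constraints on $\alpha_i,\beta$ beyond $0\le j\le n-1$ — i.e., every element of the claimed set squares to itself. I would compute $u^2=\sum_{k,\ell}\lambda_k\lambda_\ell e_{S_\ell(k)}$, collect the coefficient of each basis element $e_m$, and use (i) $S_\ell=S_{n+\ell}$ on $R_{2n}$, (ii) the hypothesis forcing the $R_n$-projection to behave like a single basis element, and (iii) the oddness of $n$ (so that $2$ is invertible mod $n$ and the pairing $i\leftrightarrow 2j-i$ is a genuine involution with the single fixed point $i=j$). This is essentially the verification already carried out in \cite[Corollary 4.8]{ENSS2022}; one reproduces it by checking the coefficient of $e_j$, of $e_{n+j}$, of $e_i$ and $e_{2j-i}$ for $i\ne j$, and of $e_{n+i},e_{n+2j-i}$, each time finding the idempotent relation collapses to a tautology given the constraints, crucially invoking that $\mathbf{k}[R_n]$ has only trivial idempotents to kill the "off-diagonal" sums $\sum_i\alpha_i(\cdots)$ when projected.

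The step I expect to be the main obstacle is the bookkeeping in the converse direction — verifying that \emph{every} element $u$ of the displayed form satisfies $u^2=u$, since the cross terms between the $\beta e_j+(1-\beta)e_{n+j}$ part and the $\alpha_i$-part, and among the $\alpha_i$-parts themselves, must all cancel. The cleanest route is probably not brute force but to observe that $v:=\beta e_j+(1-\beta)e_{n+j}$ is itself an idempotent (it lies in the fiber over the idempotent $e_j\in\mathbf{k}[R_n]$ and one checks $v^2=v$ directly using $S_j(j)=j$, $S_j(n+j)=n+j$, $S_{n+j}=S_j$), that $w:=\sum_i\alpha_i(e_i-e_{n+i}+e_{2j-i}-e_{n+2j-i})$ lies in the augmentation-zero, $p_*$-zero part, and that $vw+wv=w$ while $w^2=0$ — these two identities being where oddness of $n$ and the triviality-of-idempotents hypothesis for $R_n$ re-enter. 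Once those three facts are in hand, $u^2=(v+w)^2=v^2+vw+wv+w^2=v+w=u$, and combined with the forward direction (every idempotent has this form) the proposition follows. I would present the forward direction as the substantive part and the backward identities $w^2=0$, $vw+wv=w$ as two short lemmas or inline computations.
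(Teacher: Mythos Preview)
The paper does not actually prove this proposition: it is quoted verbatim from \cite[Corollary 4.8]{ENSS2022}, with only the remark that the proof goes through the quandle covering $R_{2n}\to R_n$ given by reduction mod $n$. Your proposal follows precisely that covering strategy and supplies the details the paper omits, so it is fully in line with what the paper indicates. Two small sharpenings: in the augmentation-zero case you can be explicit that $p_*(u)=0$ forces $u=\sum_i\lambda_i(e_i-e_{n+i})$, and then $u\cdot(e_k-e_{n+k})=0$ (since $S_k=S_{n+k}$) gives $u^2=0$ directly, so $u=0$; and in your decomposition $u=v+w$ the identities are in fact stronger than you state, namely $vw=0$ and $wv=w$ separately, again because right multiplication by $e_k-e_{n+k}$ annihilates everything while right multiplication by $v$ fixes $w$.
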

 
 We shall need the following lemmas in sections \ref{sec 4} and \ref{sec 5}.

\begin{lem}\label{r3 and r5 idempotents}
The quandle ring $\mathbb{Z}[R_3]$ has only trivial idempotents.
\end{lem}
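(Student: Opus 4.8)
The plan is to verify directly that any idempotent $u = \sum_{x \in R_3} \alpha_x e_x$ of $\mathbb{Z}[R_3]$ must be one of the three trivial ones. Write $R_3 = \{0, 1, 2\}$ with the dihedral operation $i * j = 2j - i \pmod 3$. Since the augmentation map $\varepsilon$ is a ring homomorphism, $\varepsilon(u) \in \{0, 1\}$; I would treat these two cases separately, with the $\varepsilon(u) = 1$ case being the one that should yield the trivial idempotents and the $\varepsilon(u) = 0$ case being the one to rule out entirely.

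First I would expand $u^2 = u$ into a system of polynomial equations in the coefficients $\alpha_0, \alpha_1, \alpha_2 \in \mathbb{Z}$. Using the multiplication table of $R_3$ — note $i*i = i$, and for $i \neq j$ one has $i * j = k$ where $\{i,j,k\} = \{0,1,2\}$ — the coefficient of $e_k$ in $u^2$ is a quadratic form in the $\alpha$'s. By the cyclic symmetry of $R_3$ (the $3$-cycle $(0\,1\,2)$ is a quandle automorphism), these three equations are permuted cyclically, so it suffices to write one of them and apply the symmetry. Subtracting pairs of these equations should produce factorable relations such as $(\alpha_i - \alpha_j)(\text{something}) = 0$, which, combined with the constraint $\alpha_0 + \alpha_1 + \alpha_2 = \varepsilon(u)$, pins down the solutions over $\mathbb{Z}$.

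The key integrality input is that we are working over $\mathbb{Z}$ rather than a field: several of the intermediate equations will have the shape $2\alpha_i \alpha_j = 0$ or $\alpha_i(\alpha_i - 1) = \text{(cross terms)}$, and the absence of zero-divisors together with the fact that $2$ is not a unit forces many $\alpha_i$ to vanish. I expect the case analysis to split according to how many of the $\alpha_i$ are zero; when two of them vanish the remaining equation $\alpha_k^2 = \alpha_k$ forces $\alpha_k \in \{0, 1\}$, giving exactly the trivial idempotent $e_k$ (the value $0$ being excluded since $u \neq 0$).

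The main obstacle will be organizing the case analysis cleanly so that the $\varepsilon(u) = 0$ branch is genuinely eliminated rather than merely reduced: one must check that no nonzero integer solution with $\alpha_0 + \alpha_1 + \alpha_2 = 0$ survives the quadratic constraints. I anticipate that here one derives, after symmetrization, an equation forcing $\alpha_i^2 + \alpha_j^2 + \alpha_k^2$ to equal a small multiple of the products, and then an inequality or parity argument (again using that we are over $\mathbb{Z}$, so squares are nonnegative and the only way a sum of squares of integers vanishes is if each is zero) closes it out. Since $R_3$ has only three elements, the whole computation is short enough to carry out by hand, so the risk is purely bookkeeping rather than conceptual.
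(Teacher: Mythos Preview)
Your proposal is correct and follows essentially the same route as the paper: expand $u^2=u$ into the three equations $\alpha_i = \alpha_i^2 + 2\alpha_j\alpha_k$, subtract pairs to obtain the factored relations $(\alpha_i-\alpha_j)\bigl(\varepsilon(u)-3\alpha_k-1\bigr)=0$, and then finish by a short case analysis using integrality. The only difference is organizational---the paper splits on how many of the $\alpha_i$ coincide rather than on the value of $\varepsilon(u)$---and your anticipated positivity/sum-of-squares argument in the $\varepsilon(u)=0$ branch is not actually needed, since over $\mathbb{Z}$ the factor $3\alpha_k+1$ never vanishes, so the factored relations already force $\alpha_0=\alpha_1=\alpha_2$ (hence all zero) in that case.
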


\begin{proof}
Note that idempotents of $\mathbb{Z}[R_3]$ are computed in \cite{BPS1}, and we present a direct proof here for the sake of completeness. Let $u=\alpha e_0 + \beta e_1 + \gamma e_2 \in \mathbb{Z}[R_3]$ be a non-zero idempotent. Comparing coefficients of the basis elements in the equality $u=u^2$ gives $$\alpha = \alpha^2 + 2 \beta \gamma, \quad \beta = \beta^2 + 2 \gamma \alpha \quad \textrm{and} \quad \gamma = \gamma^2 + 2 \alpha  \beta.$$ Clearly, all the coefficients cannot be equal. Suppose that precisely two coefficients are equal. If $\alpha=\beta$, then we obtain $\gamma = \gamma^2 + 2 \alpha^2$. This implies that $\alpha=0$ and  $\gamma=1$, which gives $u=e_2$. Similarly, $\beta=\gamma$ gives $u=e_0$, and $\alpha=\gamma$ gives $u=e_1$. Now, suppose that all the coefficients are distinct. Subtracting the second equation from the first gives $\alpha -\beta = \alpha^2 -\beta^2 + 2 \gamma(\beta -\alpha)$. Since $\alpha  \ne \beta$, we obtain $1 = (\alpha +\beta + \gamma) - 3 \gamma$. Since $\varepsilon(u)= 0$ or 1, we obtain $\gamma=0$. Similarly, by subtracting the other pairs of equations, we deduce that $\alpha=0$ and $\beta=0$, a contradiction. Hence $\mathbb{Z}[R_3]$ has  only trivial idempotents.
\end{proof}

\begin{lem}\label{order five quandle}
Let $X$ be the medial quandle with multiplication table 
$$X=
\begin{pmatrix} 
1 & 1 & 1 & 2 & 2 \\
2 & 2 & 2 & 3 & 3 \\
3 & 3 & 3 & 1 & 1 \\
5 & 5 & 5 & 4 & 4 \\
4 & 4 & 4 & 5 & 5 \\
\end{pmatrix}.
$$
Then the set of idempotents of $\mathbb{Z}_2[X]$ is 
\begin{small}
$$\mathcal{I}(\mathbb{Z}_2[X])= \big\{e_1,e_2,e_3,e_4,e_5,e_1+e_2+e_3,e_1+e_4+e_5,e_2+e_4+e_5, e_3+e_4+e_5,e_1+e_2+e_3+e_4+e_5 \big\}.$$ Further, $\mathcal{I}(\mathbb{Z}_2[X])$ is a medial quandle with respect to the ring multiplication.
\end{small}
\end{lem}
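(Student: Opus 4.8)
The plan is to reduce both parts of the statement to one explicit product formula in $\mathbb{Z}_2[X]$. A look at the multiplication table shows that the right translations collapse to two permutations, $S_1=S_2=S_3=\sigma$ with $\sigma=(4\;5)$, and $S_4=S_5=\sigma'$ with $\sigma'=(1\;2\;3)$; these are commuting automorphisms of $X$ (their supports $\{4,5\}$ and $\{1,2,3\}$ are disjoint) that each preserve the partition $\{1,2,3\}\sqcup\{4,5\}$ of $X$. Let $\widetilde\sigma,\widetilde{\sigma'}$ be the induced ring automorphisms of $\mathbb{Z}_2[X]$; they commute and, by functoriality, permute $\mathcal I(\mathbb{Z}_2[X])$. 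Writing a general element of $\mathbb{Z}_2[X]$ as $u=\sum_{i\in S}e_i$ with $S\subseteq\{1,\dots,5\}$, I would first establish that for $u=\sum_{i\in S}e_i$ and $v=\sum_{j\in T}e_j$,
\[
uv=a\,\widetilde\sigma(u)+b\,\widetilde{\sigma'}(u),\qquad a\equiv|T\cap\{1,2,3\}|,\quad b\equiv|T\cap\{4,5\}|\pmod 2,
\]
by splitting $\sum_{i\in S,\,j\in T}e_{i*j}=\sum_{i\in S}\big(\sum_{j\in T}e_{S_j(i)}\big)$ according to whether $j\in\{1,2,3\}$ or $j\in\{4,5\}$ and reducing the resulting multiplicities modulo $2$.

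For the first assertion I apply the formula with $v=u$, so that $u^2=a\,\widetilde\sigma(u)+b\,\widetilde{\sigma'}(u)$ with $(a,b)$ the parity pair of $S$, and split into four cases. The case $(a,b)=(0,0)$ gives $u^2=0$, impossible for a nonzero idempotent; the case $(a,b)=(1,1)$ is excluded because comparing $e_4$- and $e_5$-coefficients in $u=\widetilde\sigma(u)+\widetilde{\sigma'}(u)$ forces $S\cap\{4,5\}=\varnothing$, contradicting $b=1$. In the case $(a,b)=(1,0)$ the condition reads $\widetilde\sigma(u)=u$, which is automatic here since a subset with even $\{4,5\}$-intersection is $\sigma$-invariant, so all eight parity-$(1,0)$ subsets $\{1\},\{2\},\{3\},\{1,2,3\},\{1,4,5\},\{2,4,5\},\{3,4,5\},\{1,2,3,4,5\}$ occur; in the case $(a,b)=(0,1)$ the condition $\widetilde{\sigma'}(u)=u$ forces $S\cap\{1,2,3\}=\varnothing$ and $|S\cap\{4,5\}|=1$, leaving only $\{4\}$ and $\{5\}$. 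This yields exactly the ten idempotents listed.

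For the second assertion, note that every idempotent has parity pair $(1,0)$ or $(0,1)$, giving a partition $\mathcal I(\mathbb{Z}_2[X])=A\sqcup B$ with $B=\{e_4,e_5\}$. By the product formula, for $u,v\in\mathcal I(\mathbb{Z}_2[X])$ the ring product is $uv=\Phi(v)(u)$ where $\Phi(v)=\widetilde\sigma$ if $v\in A$ and $\Phi(v)=\widetilde{\sigma'}$ if $v\in B$; hence $\mathcal I(\mathbb{Z}_2[X])$ is closed under the product and each right translation $S_v=\Phi(v)|_{\mathcal I(\mathbb{Z}_2[X])}$ is a bijection. Moreover $\widetilde\sigma$ fixes $A$ pointwise and $\widetilde{\sigma'}$ fixes $B$ pointwise, so $S_v$ fixes $v$; and since $\widetilde\sigma,\widetilde{\sigma'}$ both preserve the blocks $\{1,2,3\},\{4,5\}$, they preserve $A$ and $B$, so $uv$ lies in the same part as $u$ and therefore $\Phi(uv)=\Phi(u)$. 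It follows that the two sides of the self-distributivity identity $(u*v)*w=(u*w)*(v*w)$ equal $\Phi(w)\Phi(v)(u)$ and $\Phi(v)\Phi(w)(u)$, and likewise the two sides of the mediality identity $(u*v)*(w*z)=(u*w)*(v*z)$ equal $\Phi(w)\Phi(v)(u)$ and $\Phi(v)\Phi(w)(u)$; since $\widetilde\sigma$ and $\widetilde{\sigma'}$ commute, all of these coincide. Equivalently, $\Inn\big(\mathcal I(\mathbb{Z}_2[X])\big)=\langle\widetilde\sigma,\widetilde{\sigma'}\rangle$ is abelian, which forces mediality. Hence $\mathcal I(\mathbb{Z}_2[X])$ is a medial quandle under the ring multiplication.

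The only genuinely non-routine point is the opening observation that every right translation of $X$, and hence every product in $\mathbb{Z}_2[X]$, is controlled by the two commuting block permutations $\sigma,\sigma'$; once the product formula is available, the enumeration of idempotents and the verification of the quandle and mediality axioms are pure bookkeeping. The one thing to stay careful about is confirming that no idempotent has parity pair $(1,1)$ --- equivalently, that each idempotent meets exactly one of the blocks $\{1,2,3\},\{4,5\}$ in an odd number of points --- since this dichotomy is exactly what makes the second assertion fall out cleanly.
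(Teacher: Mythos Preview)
Your proof is correct. For the enumeration, the paper proceeds by brute force: it checks directly that no two-element sum is idempotent, inspects all ten three-element sums individually, rules out the four-element sums, and notes that the full sum $e_1+\cdots+e_5$ is idempotent. Your derivation of the closed product formula $uv=a\,\widetilde\sigma(u)+b\,\widetilde{\sigma'}(u)$ and the ensuing parity classification is more structural and explains \emph{why} precisely those ten subsets arise, rather than merely confirming the list. For the quandle structure, both arguments rest on the same observation---that right multiplication by any idempotent coincides with the ring automorphism induced by one of the two inner automorphisms $\sigma,\sigma'$ of $X$---so this part is essentially the same. For mediality the paper simply invokes the mediality of $X$ (the medial identity in $X$ passes verbatim to the ring, hence to $\mathcal I$), whereas you deduce it from the commutativity of $\widetilde\sigma$ and $\widetilde{\sigma'}$; your route is self-contained but tailored to this example, while the paper's is an instance of a general principle applicable to any medial quandle.
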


\begin{proof}
A direct check shows that an element of the form $e_i+e_j$ is not an idempotent.  Let us consider idempotents built up with three basis elements.  We can check that $e_1+e_2+e_4$, $e_1+e_2+e_5$, $e_2+e_3+e_4$, $e_2+e_3+e_5$, $e_1+e_3+e_4$ and $e_1+e_3+e_5$ are not idempotents, whereas $e_1 +e_2+e_3$, $e_1+e_4+e_5$, $e_2+e_4+e_5$ and $e_3+e_4+e_5$ are all idempotents. We can also see that no element built up with four basis elements is an idempotent, and that $e_1+e_2+e_3+e_4+e_5$ is clearly an idempotent. Hence, the set $\mathcal{I}(\mathbb{Z}_2[X])$ is as desired. That $\mathcal{I}(\mathbb{Z}_2[X])$ is a quandle follows from the fact that, for this quandle, the right multiplication by each idempotent is simply the automorphism of $\mathbb{Z}_2[X]$ induced by an inner automorphism of $X$. Lastly, mediality of $\mathcal{I}(\mathbb{Z}_2[X])$ follows directly from mediality of $X$.
\end{proof}

It is interesting to note that the set of all idempotents has the structure of a quandle in many cases.  If $X$ is a trivial quandle, then \cite[Proposition 4.1]{BPS1} gives

\begin{equation}\label{idempotents trivial quandle}
\mathcal{I}(\mathbf{k}[X] )= \Big\{ \sum_{x \in F} \alpha_x e_x \mid  F~\textrm{is a finite subset of}~X~~\textrm{and}~\sum_{x \in F} \alpha_x =1 \Big\}.
\end{equation}
A direct check shows that $\mathcal{I}(\mathbf{k}[X] )$ is a trivial quandle with respect to the ring multiplication (see also \cite[Corollary 4.12]{ENSS2022}). More generally, the following is a consequence of \cite[Theorem 4.5]{ENSS2022}.

\begin{pro}
Let $p : X \to Y$ be a quandle covering such that $X$ is involutory and $\mathbf{k}[Y]$ has only trivial idempotents. Then the set of all idempotents of $\mathbf{k}[X]$ is a quandle with respect to the ring multiplication.
\end{pro}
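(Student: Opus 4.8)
The plan is to control the idempotents of $\mathbf{k}[X]$ through the induced ring homomorphism $p_* : \mathbf{k}[X] \to \mathbf{k}[Y]$, and to show that on $\mathcal{I}(\mathbf{k}[X])$ right multiplication is always one of the algebra automorphisms of $\mathbf{k}[X]$ coming from the inner automorphisms of $X$. For $a \in Y$, write $\rho_a$ for the algebra automorphism of $\mathbf{k}[X]$ induced by the quandle automorphism $S_{\tilde a}$ of $X$, where $\tilde a$ is any lift of $a$; this is well defined since $p$ is a covering, so $S_{\tilde a}$ depends only on $a = p(\tilde a)$, and $\rho_a$ is an involution because $X$ is involutory.

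First I would record two elementary facts. \emph{(a)} If $u \in \mathbf{k}[X]$ is an idempotent, then $p_*(u) = e_a$ for some $a \in Y$: indeed $p_*(u)$ is either $0$ or an idempotent of $\mathbf{k}[Y]$, and in the second case it is trivial by hypothesis, while $p_*(u) = 0$ is impossible because, by the computation in \emph{(b)} below, it would force $u^2 = 0 \ne u$. \emph{(b)} For every $v \in \mathbf{k}[X]$ and every idempotent $u$ with $p_*(u) = e_a$, one has $v \cdot u = \rho_a(v)$. This is a short computation: writing $v = \sum_z \alpha_z e_z$ and $u = \sum_x \mu_x e_x$, the product $\sum_{z,x} \alpha_z \mu_x\, e_{z * x}$ may be regrouped according to the value of $p(x)$ — legitimate since $z * x = S_x(z)$ depends only on $z$ and $p(x)$ — and the total coefficient over a fibre $p^{-1}(b)$ is the coefficient of $e_b$ in $p_*(u)$, that is $\delta_{a b}$; hence $v \cdot u = \sum_z \alpha_z e_{z * \tilde a} = \rho_a(v)$ (and $v \cdot u = 0$ if $p_*(u) = 0$).

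Granting \emph{(a)} and \emph{(b)}, the quandle axioms for $\big(\mathcal{I}(\mathbf{k}[X]), \cdot\big)$ follow quickly. \emph{Closure.} If $u, v$ are idempotents and $p_*(v) = e_b$, then $u \cdot v = \rho_b(u)$ is a nonzero idempotent, since $\rho_b$ is an algebra automorphism; and $v \cdot v = v^2 = v$, so right multiplication by $v$ fixes $v$. \emph{Invertibility.} By \emph{(b)}, right multiplication by $v$ agrees with $\rho_b$ on all of $\mathbf{k}[X]$, hence restricts to a bijection of $\mathcal{I}(\mathbf{k}[X])$ which is its own inverse — this is where the involutory hypothesis is used. \emph{Right distributivity.} For idempotents $u, v, w$ with $p_*(v) = e_b$ and $p_*(w) = e_c$, note that $p$ being a quandle homomorphism gives $p \circ S_{\tilde b} = S_b \circ p$, so $p_*$ carries $\rho_b$ to the algebra automorphism of $\mathbf{k}[Y]$ induced by $S_b$; in particular $p_*(w \cdot v) = p_*(\rho_b(w)) = e_{S_b(c)} = e_{c * b}$. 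Applying \emph{(b)} twice then yields $(u \cdot w) \cdot v = \rho_b(\rho_c(u))$ and $(u \cdot v) \cdot (w \cdot v) = \rho_{c * b}(\rho_b(u))$, and these agree because right distributivity in $X$ reads $S_b S_c = S_{c * b} S_b$, whence $\rho_b \rho_c = \rho_{c * b} \rho_b$. Therefore $\mathcal{I}(\mathbf{k}[X])$ is a quandle under the ring multiplication.

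The only point with any real content is step \emph{(a)} — ruling out idempotents of augmentation $0$ — and it already drops out of the computation in \emph{(b)}; everything else is bookkeeping with the covering condition and with the fact that each inner automorphism of $X$ induces an algebra automorphism of $\mathbf{k}[X]$. Alternatively, one may simply substitute the explicit parametrization of $\mathcal{I}(\mathbf{k}[X])$ furnished by \cite[Theorem 4.5]{ENSS2022} — of which Proposition~\ref{covering idempotents} is the special case $X = R_{2n}$, $Y = R_n$ — into the same three checks.
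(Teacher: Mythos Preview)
Your proof is correct. The paper itself gives no argument for this proposition: it simply records it as a consequence of \cite[Theorem~4.5]{ENSS2022}, which supplies an explicit parametrization of $\mathcal{I}(\mathbf{k}[X])$ under these hypotheses (Proposition~\ref{covering idempotents} being the special case $R_{2n}\to R_n$). You mention this route at the end, but your main argument takes a genuinely different and more conceptual path: rather than first classifying all idempotents and then verifying the quandle axioms on that explicit list, you show directly via the fibrewise regrouping in step~\emph{(b)} that right multiplication by any idempotent $u$ coincides on all of $\mathbf{k}[X]$ with the algebra automorphism $\rho_a$ determined by $p_*(u)=e_a$, after which the three quandle axioms reduce to routine identities among the $\rho_a$. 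This buys you a self-contained proof independent of the classification theorem. It is also worth noting that the only place you invoke the involutory hypothesis is to say that $\rho_b$ is its own inverse; but bijectivity of $\rho_b$ on $\mathcal{I}(\mathbf{k}[X])$ already follows from its being an algebra automorphism of $\mathbf{k}[X]$, so your argument in fact goes through for an arbitrary quandle covering $p:X\to Y$ with $\mathbf{k}[Y]$ having only trivial idempotents --- a mild strengthening that the citation-based approach does not obviously yield.
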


The following useful observations are immediate.

\begin{pro}
The following holds in $\mathbf{k}[X]$:
\begin{enumerate}
\item If $X$ is a medial quandle, then right and left multiplications by idempotents are distributive.
\item If $X$ is a medial quandle, then $\mathcal{I}(\mathbf{k}[X])$ is closed with respect to the ring multiplication.
\item If $\mathcal{I}(\mathbf{k}[X])$ is a quandle, then each idempotent has augmentation value 1.
\end{enumerate}
\end{pro}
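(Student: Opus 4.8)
The plan is to verify each of the three assertions by a short direct computation, working with the formula for the product in $\mathbf{k}[X]$ and the definition of mediality. For part (1), let $u \in \mathcal{I}(\mathbf{k}[X])$ be an idempotent and consider the right multiplication $R_u : \mathbf{k}[X] \to \mathbf{k}[X]$, $v \mapsto vu$. I would first reduce to the claim that $(ab)u = (au)(bu)$ for all $a, b \in \mathbf{k}[X]$; by bilinearity it suffices to check this on basis elements $a = e_x$, $b = e_y$ and on $u = \sum_z \gamma_z e_z$. Expanding both sides, the left side is $\sum_{z} \gamma_z e_{(x*y)*z}$, while the right side is $\sum_{z, z'} \gamma_z \gamma_{z'} e_{(x*z)*(y*z')}$. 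Here mediality enters: $(x*z)*(y*z') = (x*y)*(z*z')$, so the right side becomes $\sum_{z, z'} \gamma_z \gamma_{z'} e_{(x*y)*(z*z')} = e_{x*y}\cdot\big((\sum_z \gamma_z e_z)(\sum_{z'} \gamma_{z'} e_{z'})\big)$ pushed through $R_{?}$—more precisely it equals $S$ applied appropriately—and since $u^2 = u$ this collapses to $\sum_z \gamma_z e_{(x*y)*z}$, matching the left side. The argument for left multiplication $L_u$ being distributive is symmetric, using the other grouping in the mediality identity; I should be slightly careful that the relevant one-sided distributivity ($u(ab) = (ua)(ub)$) again only needs $u^2 = u$ together with mediality, which it does by the same expansion.

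For part (2), I would take two idempotents $u, v \in \mathcal{I}(\mathbf{k}[X])$ and show $uv$ is again idempotent, i.e. $(uv)(uv) = uv$. Using part (1) (right multiplication by the idempotent $v$ is distributive) we get $(uv)(uv) = \big((uv)u\big)v$; then right multiplication by $v$ applied to $(uv)u = (uu)(vu)$—wait, better to organize it as: by part (1), $R_v$ distributes, so $(uv)(uv) = (u \cdot u \cdot \text{?})$. Concretely, $(uv)(uv) = \big((uv)u\big)v$ is not quite an instance of distributivity; instead I would write $(uv)(uv)$, apply distributivity of $R_{uv}$? That is circular. The clean route: since $X$ is medial, for the specific elements $u,v$ one has $(uv)(uv) = (uu)(vv) = uv$ directly, because the mediality identity $(a*b)*(c*d) = (a*c)*(b*d)$ extends $\mathbf{k}$-bilinearly to $(ab)(cd) = (ac)(bd)$ in $\mathbf{k}[X]$ for all $a,b,c,d$; taking $a = c = u$ and $b = d = v$ gives $(uv)(uv) = (uu)(vv) = u^2 v^2 = uv$. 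Note $uv \ne 0$ since $\varepsilon(uv) = \varepsilon(u)\varepsilon(v) \in \{0,1\}$, and if it were $0$ then... actually $uv$ could a priori be the zero element; but if $\varepsilon(u) = \varepsilon(v) = 1$ then $\varepsilon(uv) = 1 \ne 0$ so $uv \ne 0$, and if either augmentation is $0$ the statement about "closed" should be read as: $uv$ is either $0$ or an idempotent—I will state it carefully, or simply note this subtlety. The honest version: $(uv)^2 = uv$ always holds, so $uv$ is an idempotent whenever it is nonzero.

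For part (3), suppose $\mathcal{I} := \mathcal{I}(\mathbf{k}[X])$ is a quandle under the ring multiplication. The key point is that in any quandle every element is idempotent, so in particular for each $u \in \mathcal{I}$ we have $u * u = u$, i.e. $u^2 = u$—which is automatic—so that is not the content. Rather, quandle axioms require that right multiplication $S_u$ on $\mathcal{I}$ be a bijection fixing $u$, and that there exist inverse operations. The relevant consequence: apply the augmentation map $\varepsilon$. Since $\varepsilon$ is a ring homomorphism, $\varepsilon$ restricted to $\mathcal{I}$ sends the quandle multiplication to ordinary multiplication in $\mathbf{k}$, and $\varepsilon(\mathcal{I}) \subseteq \{0, 1\}$. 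If some $u \in \mathcal{I}$ had $\varepsilon(u) = 0$, then for every $v \in \mathcal{I}$ we would get $\varepsilon(vu) = \varepsilon(v)\varepsilon(u) = 0$, so $S_u(\mathcal{I}) \subseteq \varepsilon^{-1}(0) \cap \mathcal{I}$; but the trivial idempotents $e_x$ all have $\varepsilon(e_x) = 1$ and lie in $\mathcal{I}$, so $S_u$ cannot be surjective onto $\mathcal{I}$, contradicting the quandle axiom that $S_u$ is a bijection. Hence every idempotent has augmentation value $1$.

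The parts are each short; the only place requiring genuine care—and the main (modest) obstacle—is keeping the bookkeeping straight in part (1)/(2), namely recognizing that mediality of $X$ lifts to the bilinear identity $(ab)(cd) = (ac)(bd)$ on all of $\mathbf{k}[X]$, and then checking that one-sided distributivity of $R_u$ and $L_u$ needs exactly this identity together with $u^2 = u$, with no hidden use of associativity. I would also double-check the nonvanishing/augmentation subtleties so that the statement of (2) is phrased to accommodate the possibility that a product of two idempotents is zero.
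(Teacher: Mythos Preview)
Your approach is essentially the same as the paper's: lift mediality to the bilinear identity $(ab)(cd)=(ac)(bd)$ on $\mathbf{k}[X]$, deduce (1) from $(uv)w=(uv)(ww)=(uw)(vw)$ and $w(uv)=(ww)(uv)=(wu)(wv)$, deduce (2) from $(uv)(uv)=(uu)(vv)=uv$, and for (3) observe that right multiplication by an augmentation-$0$ idempotent cannot be surjective since trivial idempotents have augmentation $1$. Your remark that $uv$ could a priori be zero in (2) is a genuine subtlety that the paper does not address; flagging it and reading ``closed'' as ``$(uv)^2=uv$'' is the right call.
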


\begin{proof}
Since $X$ is medial, it follows that $(uv)(wz)=(uw)(vz)$ for all $u, v, w, z \in \mathbf{k}[X]$. If $w$ is an idempotent, then $(uv)w=(uv)(ww)= (uw)(vw)$ and $w(uv)=(ww)(uv)=(wu)(wv)$, which proves (1). If $u$ and $v$ are idempotents, then $(uv)^2=(uv)(uv)=(uu)(vv)=uv$, which is assertion (2). Since the right multiplication by an augmentation 0 idempotent cannot be surjective, assertion (3) follows.
\end{proof}

 \medskip

\section{An enhancement of the coloring invariant by quandle rings}\label{sec 4}

Let $K$ be a link in $\mathbb{R}^3$, $Q(K)$ its link quandle and $X$ a quandle. Then the number of quandle homomorphisms $|\Hom(Q(K), X)|$ is an invariant of $K$ called the {\it quandle coloring invariant}. This invariant generalises the well-known Fox coloring invariant of links.  A link invariant which determines the quandle coloring invariant is called an {\it enhancement} of the quandle coloring invariant. Furthermore, an enhancement is called {\it proper} if there are examples in which the enhancement distinguishes links which have the same quandle coloring invariant. 
\par
If the target quandle is medial, then the set of all homomorphisms itself has a quandle structure \cite[Theorem 4.1]{MR3197054}.

\begin{thm}\label{sam result}
Let $X$ and $Y$ be quandles such that $Y$ is medial. Then $\Hom(X, Y)$ is a medial quandle under the point-wise product.
\end{thm}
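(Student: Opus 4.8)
The plan is to exhibit $\Hom(X,Y)$ as a subquandle of the direct-product quandle $Y^X=\prod_{x\in X}Y$, whose underlying set is the set of all functions $X\to Y$ with the pointwise operation $(\varphi*\psi)(x)=\varphi(x)*\psi(x)$. Direct products of quandles are quandles, and a direct product of medial quandles is medial (one checks the medial law coordinatewise); since both properties pass to subalgebras, and a subset of a quandle is a subquandle precisely when it is closed under $*$ and under the division operation $(\varphi,\psi)\mapsto$ (the unique $\chi$ with $\chi*\psi=\varphi$), it suffices to prove that $\Hom(X,Y)$ is closed in $Y^X$ under both of these.

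Closure under $*$ is where mediality of $Y$ enters: for $f,g\in\Hom(X,Y)$ and $x,y\in X$,
\[
(f*g)(x*y)=f(x*y)*g(x*y)=\big(f(x)*f(y)\big)*\big(g(x)*g(y)\big)=\big(f(x)*g(x)\big)*\big(f(y)*g(y)\big)=(f*g)(x)*(f*g)(y),
\]
so $f*g$ is again a homomorphism. For the division operation, recall that in $Y^X$ the right multiplication $S_g$ is a bijection whose inverse sends $h$ to the function $h'$ with $h'(x)=S_{g(x)}^{-1}\big(h(x)\big)$; I would show that $h\in\Hom(X,Y)$ forces $h'\in\Hom(X,Y)$. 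Writing $a=h(x)$, $b=h(y)$, $c=g(x)$, $d=g(y)$ and $u*^{-1}v:=S_v^{-1}(u)$, this amounts to the identity $(a*b)*^{-1}(c*d)=(a*^{-1}c)*(b*^{-1}d)$ in $Y$, which follows from mediality since applying $S_{c*d}$ to the right-hand side and using the medial law gives $\big((a*^{-1}c)*(b*^{-1}d)\big)*(c*d)=\big((a*^{-1}c)*c\big)*\big((b*^{-1}d)*d\big)=a*b$. Hence $\Hom(X,Y)$ is a subquandle of $Y^X$, and therefore a medial quandle; idempotency, bijectivity of right multiplications, self-distributivity, and the medial law all hold on $\Hom(X,Y)$ because they hold on $Y^X$.

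If a self-contained check is preferred, the axioms and mediality can be read off pointwise: $(f*f)(x)=f(x)*f(x)=f(x)$; $S_g$ on $\Hom(X,Y)$ is injective because each $S_{g(x)}$ is, and surjective by the computation just given; and $\big((f*g)*(h*k)\big)(x)=\big(f(x)*g(x)\big)*\big(h(x)*k(x)\big)=\big(f(x)*h(x)\big)*\big(g(x)*k(x)\big)=\big((f*h)*(g*k)\big)(x)$ for all $x$, by mediality of $Y$.

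I expect the only genuine obstacle to be the verification that the pointwise-defined right inverse $h'$ of a homomorphism is again a homomorphism — equivalently, that the dual operation of a medial quandle still satisfies a medial-type identity. Everything else is immediate once the ambient product quandle $Y^X$ is in play, with mediality of $Y$ used in exactly two places: to close $\Hom(X,Y)$ under $*$ and under division.
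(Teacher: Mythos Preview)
The paper does not prove this theorem; it is quoted verbatim from Crans--Nelson \cite{MR3197054} and used as a black box. So there is no in-paper argument to compare against.

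Your proof is correct. Embedding $\Hom(X,Y)$ in the product quandle $Y^X$ and checking closure under both $*$ and the right-division $*^{-1}$ is the natural approach, and your verification of the key identity $(a*b)*^{-1}(c*d)=(a*^{-1}c)*(b*^{-1}d)$ via mediality is clean: applying $S_{c*d}$ to the right-hand side and invoking the medial law recovers $a*b$, exactly as you wrote. The pointwise checks of idempotency, right-distributivity, and mediality on $Y^X$ are routine, so inheriting them on the subquandle is immediate. One small remark: in your closing paragraph you describe the division closure as ``the only genuine obstacle,'' but you have in fact fully dispatched it; there is no residual gap.
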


Considering homomorphisms from link quandles to medial quandles, the preceding result is an enhancement of the quandle coloring invariant.  If $X$ is not connected, then the quandle structure on  $\Hom(X, Y)$ is not determined by $|Y|$. Consequently, for links $K$ with two or more components, the quandle $\Hom(Q(K), Y)$ is a stronger invariant than the coloring invariant $|\Hom(Q(K), Y)|$. See \cite{MR3197054} for related examples.
\par

Let $X$ and $Y$ be quandles and $\Hom_{alg}\big(\mathbf{k}[X], \mathbf{k}[Y]\big)$ denotes the set of $\mathbf{k}$-algebra homomorphisms from $\mathbf{k}[X]$ to  $\mathbf{k}[Y]$.

\begin{thm}\label{ring enhancement}
If $K$ is a link and $X$ a quandle, then $|\Hom_{alg}\big(\mathbf{k}[Q(K)], \mathbf{k}[X]\big)|$ is a proper enhancement of the quandle coloring invariant $|\Hom(Q(K), X)|$. Further, if $X$ is medial, then $|\Hom_{alg}\big(\mathbf{k}[Q(K)], \mathbf{k}[X]\big)|$ is a proper enhancement of $\Hom(Q(K), X)$.
\end{thm}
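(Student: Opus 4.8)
\emph{Step 1: invariance.} The plan is to first record that both quantities are link invariants. The link quandle $Q(K)$ is an invariant of $K$ up to isomorphism, and since $Y\mapsto \mathbf{k}[Y]$ is functorial, $\mathbf{k}[Q(K)]$ is an invariant up to $\mathbf{k}$-algebra isomorphism; hence $|\Hom_{alg}(\mathbf{k}[Q(K)],\mathbf{k}[X])|$ depends only on the isotopy class of $K$, for each fixed $\mathbf{k}$ and $X$. When $X$ is medial, $\Hom(Q(K),X)$ is a medial quandle by Theorem \ref{sam result}, and is likewise a link invariant up to quandle isomorphism.

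\emph{Step 2: the enhancement.} Functoriality of the quandle-ring construction gives a map $\iota\colon \Hom(Q(K),X)\to\Hom_{alg}(\mathbf{k}[Q(K)],\mathbf{k}[X])$, $f\mapsto\mathbf{k}[f]$. Since $\mathbf{k}[f]$ sends $e_x$ to $e_{f(x)}$, restriction to the canonical basis recovers $f$, so $\iota$ is injective; and if $\phi$ is a $\mathbf{k}$-algebra homomorphism with $\phi(e_x)\in\{e_y\mid y\in X\}$ for all $x$, then the map $g$ defined by $\phi(e_x)=e_{g(x)}$ satisfies $e_{g(x*y)}=\phi(e_xe_y)=e_{g(x)}e_{g(y)}=e_{g(x)*g(y)}$, hence is a quandle homomorphism, so $\operatorname{im}\iota$ is exactly the set of basis-preserving algebra homomorphisms. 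Because $\iota$ is natural with respect to the quandle isomorphisms $Q(K)\cong Q(K')$ induced by isotopies, the coloring set $\Hom(Q(K),X)$, and in particular $|\Hom(Q(K),X)|$ — and, for medial $X$, the quandle $\Hom(Q(K),X)$ of Theorem \ref{sam result} — is recovered as a canonical sub-datum of $\Hom_{alg}(\mathbf{k}[Q(K)],\mathbf{k}[X])$. This is the sense in which the latter enhances the former.

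\emph{Step 3: a transparent count.} To see that the refinement is numerically meaningful already in simple instances, I would analyse an arbitrary $\phi\in\Hom_{alg}(\mathbf{k}[Q(K)],\mathbf{k}[X])$: from $e_x^2=e_x$, each $\phi(e_x)$ is an idempotent of $\mathbf{k}[X]$ or zero, and composing with the augmentation, using $\varepsilon_X(\phi(e_{x*y}))=\varepsilon_X(\phi(e_x))\varepsilon_X(\phi(e_y))$ together with the fact that the $\Inn(Q(K))$-orbits are precisely the components of $K$, one shows $\varepsilon_X\circ\phi$ is identically $0$ or identically $1$. If $\mathbf{k}[X]$ has only trivial idempotents — say $X=R_3$ over $\mathbb{Z}$ by Lemma \ref{r3 and r5 idempotents} — the first alternative forces $\phi=0$ and the second forces $\phi\in\operatorname{im}\iota$, giving $|\Hom_{alg}(\mathbf{k}[Q(K)],\mathbf{k}[X])|=|\Hom(Q(K),X)|+1$; for a general coloring quandle only the inclusion $\operatorname{im}\iota\subseteq\Hom_{alg}(\mathbf{k}[Q(K)],\mathbf{k}[X])$ remains, the surplus homomorphisms being distributed over the non-trivial idempotents of $\mathbf{k}[X]$. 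Hence a coloring quandle with non-trivial idempotents in its quandle ring is precisely what is needed to make the enhancement proper.

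\emph{Step 4: properness, and the main obstacle.} The hard part is to exhibit explicit links witnessing the word ``proper'', and for this a computer-assisted search is needed. I would fix, over a finite coefficient ring (for instance $\mathbb{Z}_2$), a small medial quandle $X$ whose quandle ring has non-trivial idempotents, and a pair of links $K_1,K_2$ with $\Hom(Q(K_1),X)\cong\Hom(Q(K_2),X)$ as quandles — so in particular with equal quandle coloring invariant — and then compute $\Hom_{alg}(\mathbf{k}[Q(K_i)],\mathbf{k}[X])$ directly: from a Wirtinger presentation of $Q(K_i)$ one writes down a finite system of polynomial equations over the finite algebra $\mathbf{k}[X]$ (one relation per crossing, in variables indexed by the arcs) whose solution set is $\Hom_{alg}(\mathbf{k}[Q(K_i)],\mathbf{k}[X])$, and enumerates it. Finding $|\Hom_{alg}(\mathbf{k}[Q(K_1)],\mathbf{k}[X])|\neq|\Hom_{alg}(\mathbf{k}[Q(K_2)],\mathbf{k}[X])|$ then settles both properness assertions at once: it separates links with the same coloring number and, since the $\Hom$-quandles were chosen isomorphic, it shows the new invariant is strictly stronger than the $\Hom$-quandle invariant of Theorem \ref{sam result}. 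The obstacle is thus not conceptual but a matter of locating the right pair of links (and the right $X$) and certifying the two counts.
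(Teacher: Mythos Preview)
Your Steps 1 and 2 correctly establish that $|\Hom_{alg}(\mathbf{k}[Q(K)],\mathbf{k}[X])|$ is an enhancement, and this matches the paper's one-sentence argument via functoriality. Step 3 is a correct and useful observation (not made in the paper) that explains why one must choose $X$ with $\mathbf{k}[X]$ admitting non-trivial idempotents.

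The gap is Step 4. You correctly diagnose that properness needs an explicit witness, but you stop at a search strategy; the theorem is not proved until such an example is produced and verified. The paper supplies one: $\mathbf{k}=\mathbb{Z}$, $X=R_6$, and the links $K_1=L4a1\{0\}$, $K_2=L5a1\{1\}$. First it checks by hand that $\Hom(Q(K_1),R_6)\cong\Hom(Q(K_2),R_6)$ as medial quandles (both of order $12$), so neither the coloring number nor the $\Hom$-quandle invariant separates them. Then, rather than enumerating over a finite ring as you propose, it uses the explicit parametrisation of $\mathcal{I}(\mathbb{Z}[R_6])$ given by Proposition~\ref{covering idempotents} (via Lemma~\ref{r3 and r5 idempotents}): every idempotent has the shape $u_j$ for some $j\in\{0,1,2\}$ together with free integer parameters, and the relations arising from reduced two-generator presentations of $Q(K_1)$ and $Q(K_2)$ are checked case by case against these $u_j$. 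The two links are ultimately distinguished by an asymmetry in the degenerate cases where one generator is sent to $0$. So the paper's route is a structural hand computation over $\mathbb{Z}$, made tractable by the covering description of idempotents, not a finite computer enumeration; your $\mathbb{Z}_2$ plan is reasonable in principle, but until it is executed on a concrete pair of links and the two counts are certified, it remains a plan rather than a proof.
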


\begin{proof}
Since every homomorphism of quandles induces a $\mathbf{k}$-linear homomorphism of their quandle rings, there is an injection of $\Hom(Q(K), X)$ into $\Hom_{alg}\big(\mathbf{k}[Q(K)], \mathbf{k}[X]\big)$, and hence $|\Hom_{alg}\big(\mathbf{k}[Q(K)], \mathbf{k}[X]\big)|$ is an enhancement of  $|\Hom(Q(K), X)|$. The following example shows that this invariant is, in fact, a proper enhancement of the quandle $\Hom(Q(K), X)$ when $X$ is medial. 

\begin{ex}
Consider braid diagrams of links $K_1= L4a1\{0\}$ and $K_2=L5a1\{1\}$ as in Figure \ref{fig2} (see \cite{knot info} for the notational convention).
 \begin{figure}[!ht]
 \begin{center}
\includegraphics[height=6cm, width=8cm]{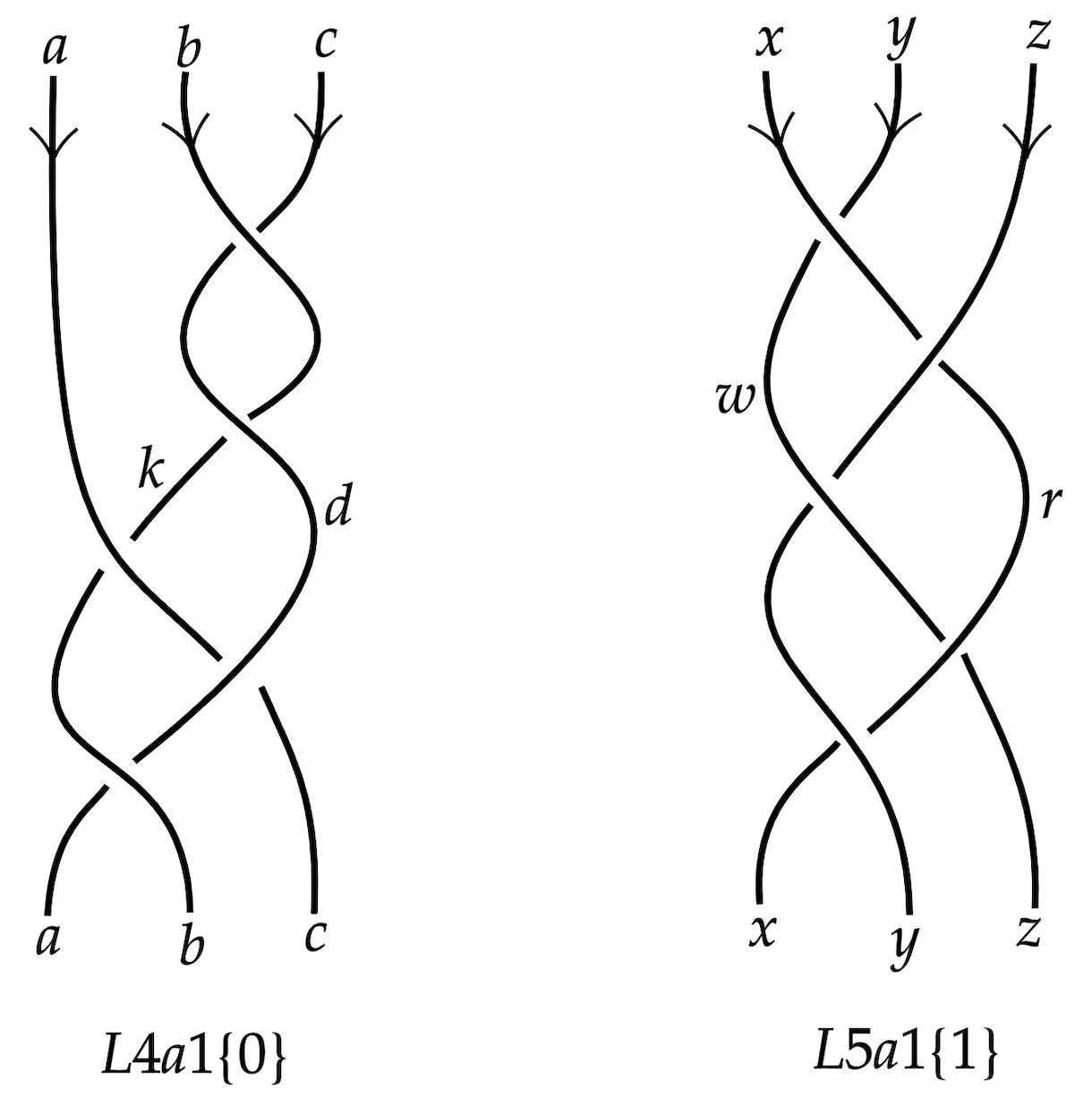}
\end{center}
\caption{Braid diagrams of links  $K_1=L4a1\{0\}$ and $K_2=L5a1\{1\}$.} \label{fig2}
\end{figure}
Using the labellings as in Figure \ref{fig2}, we obtain presentations of knot quandles $Q(K_1)$ and  $Q(K_2)$ as 
\begin{eqnarray}
\label{eq prest 5} \quad \quad Q(K_1) &=& \big\langle a, b, c, d, k ~\mid~ d*b=c,~ k*d=b,~ b*a=k,~ a*d=c, ~a*b=d \big\rangle,\\
\label{eq prest 6} \quad  \quad Q(K_2) &=& \big\langle x, y, z, r, w ~\mid~ w*x=y, ~x*z=r, ~y*w=z,~ w*r=z,~ x*y=r \big\rangle.
\end{eqnarray}
An analogue of Tietze's theorem relating two finite presentations of a quandle is known due to Fenn and Rourke \cite[Theorem 4.2]{MR1194995}.  Applying Tietze transformations to \eqref{eq prest 5} and \eqref{eq prest 6} gives
\begin{eqnarray}
\label{eq prest 7} \quad \quad && Q(K_1) = \big\langle a, b ~\mid~ a*(a*b)=(a*b)*b, ~ (b*a)*(a*b)=b \big\rangle,\\
\label{eq prest 8} \quad \quad  && Q(K_2) = \big\langle x, w ~\mid~ w*(x*(w*x))=(w*x)*w,~x*((w*x)*w)=x*(w*x) \big\rangle.
\end{eqnarray}
We color $K_1$ and $K_2$ by the dihedral quandle $R_6= \{0, 1, 2, 3, 4, 5 \}$. If $f:Q(K_1) \to R_6$, then relations in \eqref{eq prest 7} show that $2f(a)=2f(b)$, and hence we have $|\Hom(Q(K_1), R_6)|=12$. Similarly, if $f:Q(K_2) \to R_6$, then relations in \eqref{eq prest 7} imply that $2f(x)=2f(v)$, and hence $|\Hom(Q(K_2), R_6)|=12$. In fact,  we have 
$$\Hom(Q(K_1), R_6) \cong \Hom(Q(K_2), R_6)$$ as medial quandles. 
\par
We claim that $|\Hom_{alg}\big(\mathbb{Z}[Q(K_1)], \mathbb{Z}[R_6]\big)| \neq |\Hom_{alg}\big(\mathbb{Z}[Q(K_2)], \mathbb{Z}[R_6]\big)|$. It follows from Lemma \ref{r3 and r5 idempotents} that $\mathbb{Z}[R_3]$ has only trivial idempotents. Hence, by Proposition \ref{covering idempotents}, we have
\begin{small}
$$ \mathcal{I}\big(\mathbb{Z}[R_6]\big) =\Big\{ \big(\beta e_j + (1-\beta)e_{j+3}\big)+ \sum_{i=0}^{2} \alpha_i \big(e_i-e_{i+3} + e_{2j-i}- e_{2j-i+3}\big) ~\bigl\vert 
~ 0 \le j \le 2 ~~\textrm{and} ~~ \alpha_i, \beta \in \mathbb{Z} \Big\}.$$
\end{small}
If $f \in \Hom_{alg}\big(\mathbb{Z}[Q(K_1)], \mathbb{Z}[R_6]\big)$, then relations in \eqref{eq prest 7} give
\begin{eqnarray}
\label{eq of f9}  f(e_a)\big(f(e_a) f(e_b)\big) &=&\big(f(e_a)f(e_b)\big) f(e_b),\\
\label{eq of f10}  \big(f(e_b)f(e_a)\big) \big(f(e_a) f(e_b)\big) &=& f(e_b).
 \end{eqnarray}
If $f(e_a)=0$, then \eqref{eq of f10} implies that $f(e_b)=0$. On the other hand, if $f(e_b)=0$, then  $f(e_a)$ can take any value from $\mathcal{I}(\mathbb{Z}[R_6]) \cup \{ 0\}$. 
\par
We now consider the case when $f(e_a), f(e_b)$ lie in $\mathcal{I}(\mathbb{Z}[R_6])$. Let us write $u_j= \big(\beta e_j + (1-\beta)e_{j+3}\big) + v_j$ for $j=0, 1, 2$, where $v_j= \sum_{i=0}^{2} \alpha_i \big(e_i-e_{i+3} + e_{2j-i}- e_{2j-i+3}\big)$. If $f(e_a)= u_j$ and  $f(e_b)=u_k$ for $j \ne k$, then we see that 
$$f(e_a)\big(f(e_a) f(e_b)\big) = u_j (u_j u_k)=u_j u_{2k-j}=u_{k},$$
whereas
$$\big(f(e_a)f(e_b)\big) f(e_b)=(u_j u_k) u_k= u_{2k-j}u_k= u_{j}.$$
Hence, $f$ does not satisfy the relation \eqref{eq of f9}. Suppose that $f(e_a)= u_j$ and  $f(e_b)=u_j'$ for some $j=0, 1, 2$, where $u_j= \big(\beta e_j + (1-\beta)e_{j+3}\big) + v_j$,  $u_j'= \big(\beta' e_j + (1-\beta')e_{j+3}\big) + v_j'$, $v_j= \sum_{i=0}^{2} \alpha_i \big(e_i-e_{i+3} + e_{2j-i}- e_{2j-i+3}\big)$  and $v_j'= \sum_{i=0}^{2} \alpha_i' \big(e_i-e_{i+3} + e_{2j-i}- e_{2j-i+3}\big)$. Then, we see that
$$f(e_a)\big(f(e_a) f(e_b)\big) = u_j (u_j u_j')=u_j=\big(f(e_a)f(e_b)\big) f(e_b)$$
and
$$\big(f(e_b)f(e_a)\big) \big(f(e_a) f(e_b)\big) = (u_j' u_j)(u_j u_j')= u_j= f(e_b).$$
Hence, $f$ satisfies both relations \eqref{eq of f9}  and \eqref{eq of f10}.
\par

Next, we take $g \in \Hom_{alg}\big(\mathbb{Z}[Q(K_2)], \mathbb{Z}[R_6]\big)$. Then relations in \eqref{eq prest 8} give
\begin{eqnarray}
\label{eq of f11}  g(e_w) \big(g(e_x) \big(g(e_w)g(e_x) \big) \big) &=& \big(g(e_w)g(e_x) \big) g(e_w),\\
\label{eq of f12}  g(e_x) \big(\big(g(e_w) g(e_x) \big) g(e_w) \big) &=& g(e_x) \big(g(e_w) g(e_x) \big).
 \end{eqnarray}
If $g(e_x)=0$, then  $g(e_w)$ can take any value from $\mathcal{I}(\mathbb{Z}[R_6]) \cup \{ 0\}$. Similarly, if $g(e_w)=0$, then  $g(e_x)$ can take any value from $\mathcal{I}(\mathbb{Z}[R_6]) \cup \{ 0\}$. 
\par
Now consider the case when $g(e_x), g(e_w)$ lie in $\mathcal{I}(\mathbb{Z}[R_6])$. If $g(e_x)= u_j$ and  $g(e_w)=u_k$ for $j \ne k$, then we see that 
$$g(e_w) \big(g(e_x) \big(g(e_w)g(e_x) \big) \big) = u_k (u_j (u_k u_j)) =u_k (u_j u_{2j-k})= u_k u_{3j-2k}=u_{k}$$
and 
$$\big(g(e_w)g(e_x) \big) g(e_w)=(u_k u_j)u_k=u_{2j-k} u_k=u_{j}.$$
Hence, $g$ does not satisfy the relation \eqref{eq of f11}.  Suppose that $g(e_x)= u_j$ and  $g(e_w)=u_j'$ for some $j=0, 1, 2$, where $u_j= \big(\beta e_j + (1-\beta)e_{j+3}\big) + v_j$,  $u_j'= \big(\beta' e_j + (1-\beta')e_{j+3}\big) + v_j'$, $v_j= \sum_{i=0}^{2} \alpha_i \big(e_i-e_{i+3} + e_{2j-i}- e_{2j-i+3}\big)$  and $v_j'= \sum_{i=0}^{2} \alpha_i' \big(e_i-e_{i+3} + e_{2j-i}- e_{2j-i+3}\big)$. Then, we see that
$$g(e_w) \big(g(e_x) \big(g(e_w)g(e_x) \big) \big) =u_j'(u_j(u_j'u_j)) =u_j'= (u_j'u_j)u_j'=  \big(g(e_w)g(e_x) \big) g(e_w)$$
and
$$g(e_x) \big(\big(g(e_w) g(e_x) \big) g(e_w) \big) =u_j((u_j'u_j)u_j')=u_j= u_j(u_j'u_j)= g(e_x) \big(g(e_w) g(e_x) \big).$$
Hence, $g$ satisfies both relations \eqref{eq of f11} and \eqref{eq of f12}. This gives $|\Hom_{alg}\big(\mathbb{Z}[Q(K_1)], \mathbb{Z}[R_6]\big)| \neq |\Hom_{alg}\big(\mathbb{Z}[Q(K_2)], \mathbb{Z}[R_6]\big)|$, which proves our claim.
\end{ex}
This completes the proof of the theorem.
\end{proof}
\medskip

\section{An enhancement of the coloring invariant by idempotents}\label{sec 5}
In this section, we consider colorings of links by idempotents of quandle rings, and prove that it gives a proper enhancement of the quandle coloring as well as the $\Hom$ quandle invariant.

\begin{thm}\label{idempotent enhancement}
If $K$ is a link and $X$ a quandle such that $\mathcal{I}(\mathbf{k}[X])$ is a quandle with respect to the ring multiplication, then $|\Hom\big(Q(K),\mathcal{I}(\mathbf{k}[X]) \big)|$ is a proper enhancement of the quandle coloring invariant $|\Hom\big(Q(K), X\big)|$. Furthermore, if $X$ is medial, then the quandle $\Hom\big(Q(K),\mathcal{I}(\mathbf{k}[X]) \big)$ is a proper enhancement of $\Hom\big(Q(K), X\big)$.
\end{thm}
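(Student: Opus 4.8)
The plan is to follow the template of Theorem~\ref{ring enhancement}. First I would produce a natural injection of $\Hom(Q(K),X)$ into $\Hom\big(Q(K),\mathcal{I}(\mathbf{k}[X])\big)$, which shows that the latter is an enhancement of the quandle colouring invariant; then I would exhibit an explicit pair of links on which the new invariant separates links that have the same colouring invariant (indeed the same $\Hom$ quandle invariant). The medial refinement will drop out once the injection is recognised as a quandle embedding.

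For the enhancement part, the assignment $x\mapsto e_x$ identifies $X$ with the subset of trivial idempotents of $\mathcal{I}(\mathbf{k}[X])$; since $\mathcal{I}(\mathbf{k}[X])$ is assumed closed under the ring multiplication, this is a subquandle isomorphic to $X$. Composing a colouring $Q(K)\to X$ with this inclusion gives an injection $\Hom(Q(K),X)\hookrightarrow\Hom\big(Q(K),\mathcal{I}(\mathbf{k}[X])\big)$ that is natural in $K$, so $\big|\Hom\big(Q(K),\mathcal{I}(\mathbf{k}[X])\big)\big|$ is an enhancement of $\big|\Hom(Q(K),X)\big|$ in the same sense as in Theorem~\ref{ring enhancement}. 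If $X$ is medial then the identity $(uv)(wz)=(uw)(vz)$ holds in all of $\mathbf{k}[X]$, hence in $\mathcal{I}(\mathbf{k}[X])$, so $\mathcal{I}(\mathbf{k}[X])$ is itself a medial quandle; by Theorem~\ref{sam result} both $\Hom\big(Q(K),\mathcal{I}(\mathbf{k}[X])\big)$ and $\Hom(Q(K),X)$ are medial quandles under the point-wise product, and the inclusion above is a quandle homomorphism, so $\Hom\big(Q(K),\mathcal{I}(\mathbf{k}[X])\big)$ contains $\Hom(Q(K),X)$ as a subquandle and is an enhancement of it at the quandle level as well.

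For properness I would take $\mathbf{k}=\mathbb{Z}_2$ and $X$ the medial quandle of order five from Lemma~\ref{order five quandle}, so that $\mathcal{I}(\mathbb{Z}_2[X])$ is the concrete medial quandle of order ten computed there; in particular it is finite, so all colouring counts stay finite. One then selects two links $K_1,K_2$ --- for instance a small pair found by computer search, along the lines of the links in Figure~\ref{fig2} --- writes two-generator, two-relation presentations of $Q(K_1)$ and $Q(K_2)$ by Tietze transformations exactly as in \eqref{eq prest 7} and \eqref{eq prest 8}, and checks two things. First, solving the colouring equations over $X$ shows $\Hom(Q(K_1),X)\cong\Hom(Q(K_2),X)$ as medial quandles, so neither the colouring number nor the $\Hom$ quandle invariant distinguishes $K_1$ and $K_2$. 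Second, solving the same equations over the ten-element quandle $\mathcal{I}(\mathbb{Z}_2[X])$, using its explicit multiplication table, produces solution sets of different cardinalities; hence $\big|\Hom\big(Q(K_1),\mathcal{I}(\mathbb{Z}_2[X])\big)\big|\neq\big|\Hom\big(Q(K_2),\mathcal{I}(\mathbb{Z}_2[X])\big)\big|$ and, a fortiori, $\Hom\big(Q(K_1),\mathcal{I}(\mathbb{Z}_2[X])\big)\not\cong\Hom\big(Q(K_2),\mathcal{I}(\mathbb{Z}_2[X])\big)$, which settles properness in both formulations.

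I expect the main obstacle to be locating a workable pair $K_1,K_2$: one needs links whose $X$-colourings assemble into isomorphic medial quandles --- not merely equinumerous sets --- while their $\mathcal{I}(\mathbb{Z}_2[X])$-colourings differ in count. Once a candidate pair is in two-generator form this is a finite, essentially mechanical verification, since $\mathcal{I}(\mathbb{Z}_2[X])$ has only ten elements and a known operation; the delicate steps are matching the point-wise structures of the two $X$-colouring quandles and confirming that the four non-trivial idempotents contribute asymmetrically to the two $\mathcal{I}(\mathbb{Z}_2[X])$-counts. Finally, I would note that the hypothesis that $\mathcal{I}(\mathbf{k}[X])$ be a quandle is exactly what Lemma~\ref{order five quandle} supplies in this case; an alternative supply of examples comes from Proposition~\ref{covering idempotents} together with Lemma~\ref{r3 and r5 idempotents}, taking $X=R_{6}$ over $\mathbf{k}=\mathbb{Z}$, at the price of an infinite idempotent quandle.
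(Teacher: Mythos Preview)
Your approach matches the paper's exactly: the enhancement step via the subquandle inclusion $X\hookrightarrow\mathcal{I}(\mathbf{k}[X])$, the mediality of $\mathcal{I}(\mathbf{k}[X])$ via $(uv)(wz)=(uw)(vz)$, and the properness example using the order-five medial quandle of Lemma~\ref{order five quandle} over $\mathbb{Z}_2$. The paper carries out precisely the computation you outline with the explicit pair $K_1=L2a1\{0\}$ and $K_2=L4a1\{1\}$ (its Figure~\ref{fig3}, not the links of Figure~\ref{fig2}), finding $\Hom(Q(K_1),X)\cong\Hom(Q(K_2),X)$ of size $13$ but $|\Hom(Q(K_1),\mathcal{I}(\mathbb{Z}_2[X]))|=68\neq 76=|\Hom(Q(K_2),\mathcal{I}(\mathbb{Z}_2[X]))|$; one small slip in your write-up is that $\mathcal{I}(\mathbb{Z}_2[X])$ has five non-trivial idempotents, not four.
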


\begin{proof}
Since $X$ can be viewed as the subquandle of  trivial idempotents in $\mathcal{I}(\mathbf{k}[X])$, it follows that  $|\Hom\big(Q(K),\mathcal{I}(\mathbf{k}[X]) \big)|$ is an enhancement of  $|\Hom(Q(K), X)|$.  Further, if $X$ is medial, then $\mathcal{I}(\mathbf{k}[X])$ is also medial. It follows from Theorem \ref{sam result} that $\Hom\big(Q(K),\mathcal{I}(\mathbf{k}[X]) \big)$ is a medial quandle containing $\Hom(Q(K), X)$ as a medial subquandle, and hence is an enhancement. The following example shows that these invariants are proper enhancements.

\begin{ex}
Consider  braid diagrams of links $K_1=L2a1\{0\}$ and $K_2=L4a1\{1\}$ as in Figure \ref{fig3}.
 \begin{figure}[!ht]
 \begin{center}
\includegraphics[height=7.3cm, width=8cm]{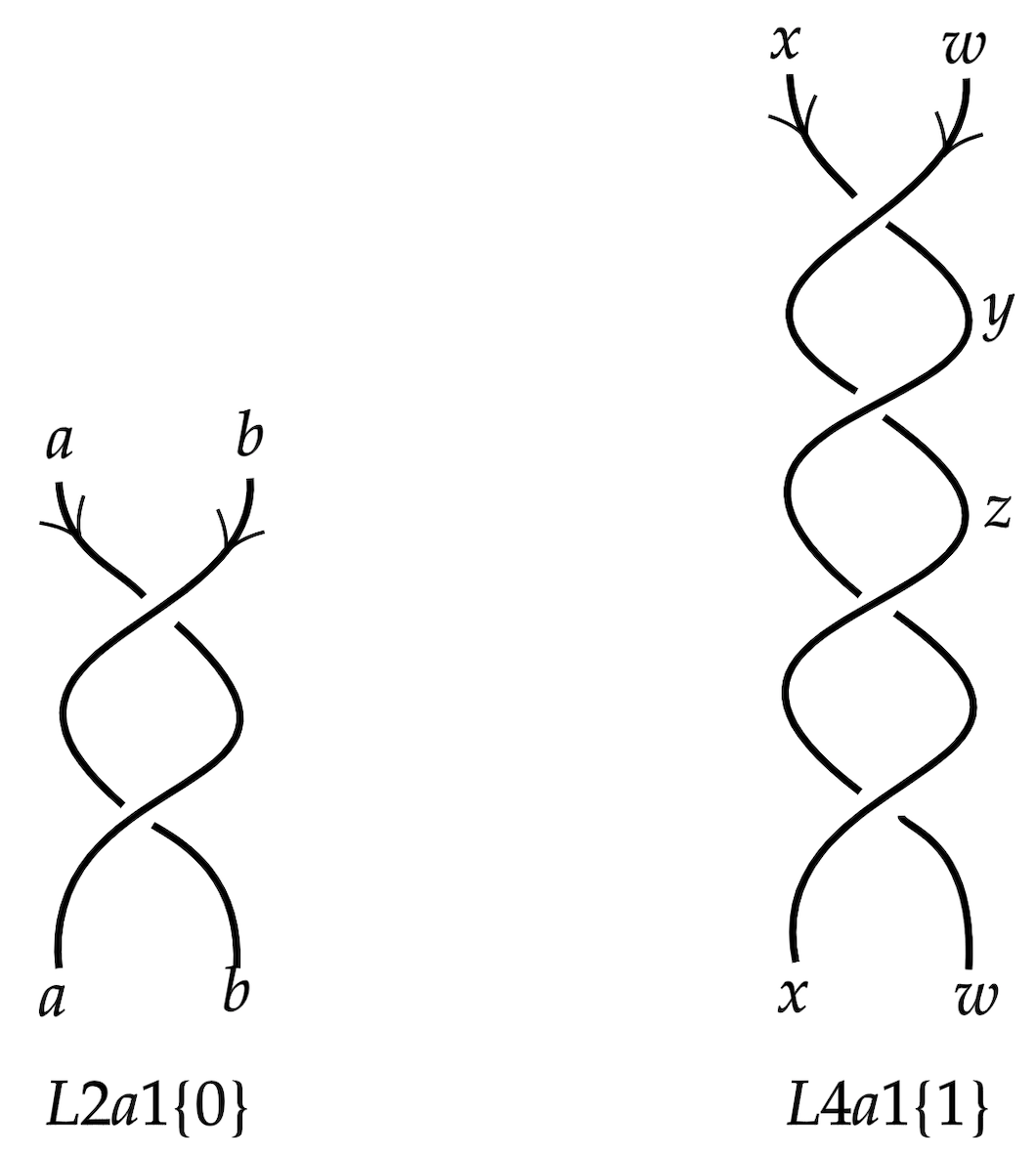}
\end{center}
\caption{Braid diagrams of links $K_1=L2a1\{0\}$ and $K_2=L4a1\{1\}$.} \label{fig3}
\end{figure}
Using the labellings as in Figure \ref{fig3}, we obtain presentations of knot quandles $Q(K_1)$ and  $Q(K_2)$ as 
\begin{eqnarray}
\label{eq prest 9} \quad \quad Q(K_1) &=& \big\langle a, b ~\mid~ a*b=a,~ b*a=b\big\rangle,\\
\label{eq prest 10} \quad  \quad Q(K_2) &=& \big\langle x, y, z, w ~\mid~ x*w=y,~ w*y=z,~ y*z=x,~ z*x=w \big\rangle.
\end{eqnarray}
We reduce the presentation \eqref{eq prest 10} to
\begin{eqnarray}
\label{eq prest 11} \quad  \quad Q(K_2) &=& \big\langle x, w ~\mid~ (x*w)* (w*(x*w))=x,~ (w*(x*w))*x=w \big\rangle.
\end{eqnarray}

We color $K_1$ and $K_2$ by the medial quandle $X$ of Lemma \ref{order five quandle}. Notice that $X$ contains $\{1, 2,3 \} $ and $\{4, 5 \}$ as trivial subquandles. If $f: Q(K_1) \to X$ is a map, then any choice of $f(a), f(b) \in \{1, 2, 3 \}$ or $f(a), f(b) \in \{4,5\}$ satisfies the relations in \eqref{eq prest 9}, and hence $f$ is a quandle homomorphism. On the other hand, if $f(a) \in   \{1, 2, 3 \}$ and $f(b) \in  \{4, 5 \}$ or $f(a) \in   \{4, 5 \}$ and $f(b) \in  \{1, 2, 3 \}$, then relations in  \eqref{eq prest 9} are not satisfied, and $f$ cannot be a quandle homomorphism. Similarly, if $f:Q(K_2) \to X$, then relations in \eqref{eq prest 11} imply that either $f(x), f(w) \in \{1, 2, 3 \}$ or $f(x), f(w) \in \{4,5\}$. Thus,  we obtain $|\Hom(Q(K_1), X)| = 13=|\Hom(Q(K_2), X)|$. In fact, we have  $\Hom(Q(K_1), X) \cong \Hom(Q(K_2), X)$ as medial quandles.
\par

Next, we color $Q(K_1)$ and $Q(K_2)$ by the quandle $\mathcal{I}(\mathbb{Z}_2[X])$ of idempotents, and claim that
$\Hom\big(Q(K_1), \mathcal{I}(\mathbb{Z}_2[X])\big) \not\cong \Hom\big(Q(K_2), \mathcal{I}(\mathbb{Z}_2[X])\big)$. For convenience, we write $$\mathcal{I}\big(\mathbb{Z}_2[X]\big)= \{u_1, u_2, u_3, u_4, u_5, u_6, u_7, u_8, u_9, u_{10}\},$$ where $u_i=e_i$ for $1 \le i \le 5$, $u_6= e_1+e_2+e_3$, $u_7= e_1+e_4+e_5$, $u_8= e_2+e_4+e_5$, $u_9= e_3+e_4+e_5$ and $u_{10}= e_1+e_2+e_3+e_4+e_5$. Then the multiplication table of the idempotent quandle is
$$
\mathcal{I}(\mathbb{Z}_2[X])=
\begin{pmatrix} 
u_1 & u_1 & u_1 & u_2 & u_2 & u_1 & u_1 & u_1 & u_1 & u_1\\
u_2 & u_2 & u_2 & u_3 & u_3 & u_2 & u_2 & u_2 & u_2 & u_2\\
u_3 & u_3 & u_3 & u_1 & u_1 & u_3 & u_3 & u_3 & u_3 & u_3\\
u_5 & u_5 & u_5 & u_4 & u_4 & u_5 & u_5 & u_5 & u_5 & u_5\\
u_4 & u_4 & u_4 & u_5 & u_5 & u_4 & u_4 & u_4 & u_4 & u_4\\
u_6 & u_6 & u_6 & u_6 & u_6 & u_6 & u_6 & u_6 & u_6 & u_6\\
u_7 & u_7 & u_7 & u_8 & u_8 & u_7 & u_7 & u_7 & u_7 & u_7\\
u_8 & u_8 & u_8 & u_9 & u_9 & u_8 & u_8 & u_8 & u_8 & u_8\\
u_9 & u_9 & u_9 & u_7 & u_7 & u_9 & u_9 & u_9 & u_9 & u_9\\
u_{10} & u_{10} & u_{10} & u_{10} & u_{10} & u_{10} & u_{10} & u_{10} & u_{10} & u_{10}\\
\end{pmatrix}.$$

Given a map $f: Q(K_1) \to \mathcal{I}(\mathbb{Z}_2[X])$, we have the following cases:
\begin{enumerate}
\item[(i)] $f(a), f(b) \in  \{ u_1, u_2, u_3, u_4, u_5 \}$.
\item[(ii)] $f(a), f(b) \in  \{ u_6, u_7, u_8, u_9, u_{10} \}$.
\item[(iii)] $f(a) \in   \{ u_1, u_2, u_3, u_4, u_5\}$ and $f(b) \in  \{ u_6, u_7, u_8, u_9, u_{10} \}$.
\item[(iv)] $f(a) \in  \{ u_6, u_7, u_8, u_9, u_{10} \}$ and $f(b) \in  \{ u_1, u_2, u_3, u_4, u_5\}$.
\end{enumerate}
Case (i) is equivalent to coloring by $X$, and hence the number of homomorphisms in this case is 13. Further, since $\{ u_6, u_7, u_8, u_9, u_{10} \}$ is a trivial quandle, the number of homomorphisms in Case (ii) is 25. Cases (iii) and (iv) work only when  $f(a) \in   \{u_1, u_2, u_3\}$ and $f(b) \in  \{ u_6, u_7, u_8, u_9, u_{10} \}$ or 
$f(a) \in  \{ u_6, u_7, u_8, u_9, u_{10} \}$ and $f(b) \in  \{ u_1, u_2, u_3\}$, which gives 30 additional homomorphisms. Thus, we have $|\Hom\big(Q(K_1), \mathcal{I}(\mathbb{Z}_2[X])\big)|= 68$. 
\par

Similarly, given a map $f: Q(K_2) \to \mathcal{I}(\mathbb{Z}_2[X])$, we have four cases:
\begin{enumerate}
\item[(i)] $f(x), f(w) \in  \{ u_1, u_2, u_3, u_4, u_5 \}$.
\item[(ii)] $f(x), f(w) \in  \{ u_6, u_7, u_8, u_9, u_{10} \}$.
\item[(iii)] $f(x) \in   \{ u_1, u_2, u_3, u_4, u_5\}$ and $f(w) \in  \{ u_6, u_7, u_8, u_9, u_{10} \}$.
\item[(iv)] $f(x) \in  \{ u_6, u_7, u_8, u_9, u_{10} \}$ and $f(w) \in  \{ u_1, u_2, u_3, u_4, u_5\}$.
\end{enumerate}
As for $Q(K_1)$, we see that the number of homomorphisms contributed by cases (i) and (ii) are 13 and 25, respectively. Case (iii) works when  $f(x) \in   \{u_1, u_2, u_3\}$ and $f(w) \in  \{ u_6, u_7, u_8, u_9, u_{10} \}$ or when $f(x) \in  \{ u_4, u_5 \}$ and $f(w) \in  \{ u_6, u_{10} \}$, which gives a total of $19$ homomorphisms. If $f(x) \in   \{u_4, u_5\}$ and $f(w) \in  \{u_7, u_8, u_9\}$, then $f$ fails to be a homomorphism. Furthermore, roles of $f(x)$ and $f(w)$ can be interchanged giving 19 additional homomorphisms. Thus, we have $|\Hom\big(Q(K_2), \mathcal{I}(\mathbb{Z}_2[X])\big)|= 76$, which proves our claim. 
\end{ex}
This shows that the enhancement is proper.
\end{proof}

\begin{rmk}
In Theorem \ref{idempotent enhancement} we can replace $\mathcal{I}(\mathbf{k}[X])$ by a maximal quandle in $\mathcal{I}(\mathbf{k}[X])$ containing all the trivial idempotents together with at least one non-trivial idempotent.
\end{rmk}
\medskip

\section{Computations of idempotents of quandle rings}\label{computer computations}
Idempotents of integral quandle rings of all quandles of order $3$ have been computed in \cite{BPS1}. Further, idempotents of integral quandle rings of certain quandles that are coverings or  (twisted) disjoint unions can be obtained from \cite[Sections 4 and 6]{ENSS2022}. In this section, we give the complete list of idempotents of integral as well as mod 2 quandle rings of all quandles of order less than six. We also determine the quandles for which the set of all idempotents is itself a quandle. The results have been verified using the Maple software \cite{Maple}.

{\tiny{
\begin{table}[H]
\caption{Idempotents for quandles of order  $3$.}
\label{Table2}
\begin{center}
\begin{tabular}{ |c|c|c|c|c|} 
 \hline
Quandle $X$  & $\mathcal{I}\big(\mathbb{Z}[X]\big)$ & Is $\mathcal{I}\big(\mathbb{Z}[X]\big)$ & $\mathcal{I}\big(\mathbb{Z}_2[X]\big)$ & Is $\mathcal{I}\big(\mathbb{Z}_2[X]\big)$ \\ 
&  & a quandle? &  & a quandle? \\  
\hline

$\left[ \begin{array}{c}

1 \;1 \;1  \\
2 \;2 \;2 \\
3 \;3\; 3 
\end{array} \right] $
& 
\makecell{ \\
    $\alpha e_1+\beta e_2+(1-\alpha-\beta) e_3.$\\
    \\
}
&
\makecell{ \\
	Yes.
}

&
\makecell{ \\
    $ e_1,~ e_2,~ e_3,$\\
$e_1+e_2+e_3.$  \\ \\
}
&
\makecell{ \\ Yes. }
 \\ 
\hline

$\left[ \begin{array}{c}
1 \;1 \;2  \\
2 \;2 \;1 \\
3 \;3\; 3 
\end{array} \right] $
& 
\makecell{ \\
    $\alpha (e_1+ e_2)+(1-2\alpha) e_3$, \\
    $\alpha e_1+(1-\alpha) e_2.$\\
   \\
}
   &
   \makecell{ \\
   	No. 
   }
   
 &
\makecell{ \\
    $ e_1,~ e_2,~ e_3,$\\
$e_1+e_2+e_3.$  \\
}
&
\makecell{ \\ Yes. }
  \\
  \hline

$\left[ \begin{array}{c}
1 \;3 \;2  \\
3 \;2 \;1 \\
2 \;1\; 3 
\end{array} \right] $ 
& 
\makecell{\\
    $e_1,~e_2,~e_3$.\\
    }
    &
    \makecell{ \\
    	Yes.
    }

&
\makecell{ \\
    $ e_1,~ e_2,~ e_3,$\\
$e_1+e_2,~ e_2+e_3,~e_3+e_1$, \\
$e_1+e_2+e_3$. \\ \\}
&
\makecell{ \\ No. }
  \\ 
  \hline
\end{tabular}
\end{center}
\end{table}

\begin{table}[H]
\caption{Idempotents for quandles of order  $4$.}
\label{Table4}
\begin{center}
\begin{tabular}{ |c|c|c|c|c|} 
 \hline
Quandle $X$  & $\mathcal{I}\big(\mathbb{Z}[X]\big)$ & Is $\mathcal{I}\big(\mathbb{Z}[X]\big)$ & $\mathcal{I}\big(\mathbb{Z}_2[X]\big)$ & Is $\mathcal{I}\big(\mathbb{Z}_2[X]\big)$ \\ 
&  & a quandle? &  & a quandle? \\  
\hline

$\left[ \begin{array}{c}
1\; 1\; 1\; 1  \\
2\; 2\; 2\; 2 \\
3\; 3\; 3\; 3 \\ 
4\; 4\; 4\; 4
\end{array} \right] $ 
 & 
 \makecell{\\
    $\alpha e_1+\beta e_2+\gamma e_3+(1-\alpha-\beta-\gamma ) e_4$.\\
\\ 
}
&
\makecell{ \\
	Yes.
}

&
\makecell{\\
$e_1,~e_2,~e_3,~e_4,$\\
$e_1+e_2+e_3,~e_1+e_2+e_4$,\\
$e_1+e_3+e_4,~e_2+e_3+e_4$.\\ \\
}
&
\makecell{ \\ Yes.}
\\
\hline

$\left[ \begin{array}{c}
1\; 1\; 1\; 1  \\
2\; 2\; 2\; 3 \\
3\; 3\; 3\; 2 \\ 
4\; 4\; 4\; 4
\end{array} \right] $
& 
\makecell{\\
$\alpha e_1+\beta (e_2+ e_3)+(1-\alpha-2\beta) e_4,$ \\
$(1-\alpha -\beta)  e_1+\alpha e_2 + \beta e_3.$\\ 
\\
    }
    &
    \makecell{ \\
    	No.
    }
&
\makecell{\\
$e_1,~e_2,~e_3,~e_4,$\\
$e_1+e_2+e_3,$\\
$e_2+e_3+e_4$.\\ \\
}
&
\makecell{ \\ Yes.}
\\
\hline

$\left[ \begin{array}{c}
1\; 1\; 1\; 2  \\
2\; 2\; 2\; 3 \\
3\; 3\; 3\; 1 \\ 
4\; 4\; 4\; 4
\end{array} \right] $ 
 & 
\makecell{\\
    $\alpha (e_1+e_2+ e_3)+(1-3\alpha) e_4,$ \\
    $\alpha e_1+\beta e_2+(1-\alpha-\beta) e_3.$\\
    \\
        }
     &
    \makecell{ \\
    	No.
    }
&
\makecell{\\ \\
$e_1,~e_2,~e_3,~e_4,$\\
$e_1+e_2+e_3$.\\ \\
}
&
\makecell{ \\ Yes.}
\\
\hline

$\left[ \begin{array}{c}
1\; 1\; 1\; 1  \\
2\; 2\; 4\; 3 \\
3\; 4\; 3\; 2 \\ 
4\; 3\; 2\; 4
\end{array} \right] $ 
& 
\makecell{\\
    $(1-3\alpha)e_1+\alpha(e_2+e_3+e_4),$ \\ 
    $(1-\alpha)e_1+\alpha e_4,$ \\
    $(1-\alpha)e_1+\alpha e_3,$ \\
    $(1-\alpha)e_1+\alpha e_2.$\\
\\
\\
}
 &
\makecell{ \\
	No.
}
&
\makecell{\\
$e_1,~e_2,~e_3,~e_4,$\\
$e_2+e_3,~e_2+e_4,$\\
$e_3+e_4,~e_2+e_3+e_4$.\\
}
&
\makecell{ \\ No.}
\\
\hline

$\left[ \begin{array}{c}
1\; 1\; 2\; 2  \\
2\; 2\; 1\; 1 \\
3\; 3\; 3\; 3 \\ 
4\; 4\; 4\; 4
\end{array} \right] $ 
 & 
\makecell{\\
    $\alpha(e_1+e_2)+\beta e_3+(1-2\alpha-\beta)e_4,$ \\ 
    $\alpha e_1+(1-\alpha)e_2+\beta(e_3-e_4).$ 
\\
\\
}
 &
\makecell{ \\
	No.
}

&
\makecell{\\
$e_1,~e_2,~e_3,~e_4,$\\
$e_1+e_2+e_3,~e_1+e_2+e_4,$\\
$e_1+e_3+e_4,~e_2+e_3+e_4$.\\ \\
}
&
\makecell{ \\ Yes.}
\\
\hline

$\left[ \begin{array}{c}
1\; 1\; 2\; 2  \\
2\; 2\; 1\; 1 \\
4\; 4\; 3\; 3 \\ 
3\; 3\; 4\; 4
\end{array} \right] $
 & 
\makecell{\\
    $\alpha e_1+(1-\alpha)e_2,$
        $\alpha e_3+(1-\alpha) e_4.$ 
\\
  \\       
 }
 &
\makecell{ \\
	Yes.}
&
\makecell{\\
$e_1,~e_2,~e_3,~e_4,$\\
$e_1+e_2+e_3,~e_1+e_2+e_4,$\\
$e_1+e_3+e_4,~e_2+e_3+e_4$.\\ \\
}
&
\makecell{ \\ Yes.}
\\
\hline

$\left[ \begin{array}{c}
1\; 4\; 2\; 3  \\
3\; 2\; 4\; 1 \\
4\; 1\; 3\; 2 \\ 
2\; 3\; 1\; 4
\end{array} \right] $ 
& 
\makecell{\\
    $e_1,~e_2,~e_3,~e_4$.
\\
  \\       
}
 &
\makecell{ \\
	Yes.
}
&
\makecell{\\
$e_1,~e_2,~e_3,~e_4.$\\ \\}
&
\makecell{ \\ Yes.}
\\
  \hline
\end{tabular}
\end{center}
\end{table}


\begin{table}[H]
\caption{Idempotents for quandles of order  $5$.}
\label{Table4}
\begin{center}
\begin{tabular}{ |c|c|c|c|c|} 
 \hline
Quandle $X$  & $\mathcal{I}\big(\mathbb{Z}[X]\big)$ & Is $\mathcal{I}\big(\mathbb{Z}[X]\big)$ & $\mathcal{I}\big(\mathbb{Z}_2[X]\big)$ & Is $\mathcal{I}\big(\mathbb{Z}_2[X]\big)$ \\ 
&  & a quandle? &  & a quandle? \\  
\hline

$\left[ \begin{array}{c}
1\; 1\; 1\; 1\; 1  \\
2\; 2\; 2\; 2\; 2 \\
3\; 3\; 3\; 3\; 3 \\ 
4\; 4\; 4\; 4\;4 \\
5\; 5\; 5\; 5\; 5
\end{array} \right] $
 & 
\makecell{
    $\alpha e_1+\beta e_2+\gamma e_3+\delta e_4+$\\
    $(1-\alpha-\beta-\gamma-\delta )e_5$. 
\\
  \\       
} 
&
\makecell{ \\
	Yes.}   
&
\makecell{\\
 $e_1,~e_2,~e_3,~e_4,~e_5$,\\
	$e_1+e_2+e_3,~e_1+e_2+e_4$,\\
	$e_1+e_2+e_5,~e_1+e_3+e_4$,\\
	$e_1+e_3+e_5,~e_1+e_4+e_5$,\\
	$e_2+e_3+e_4,~e_2+e_3+e_5$,\\
	$e_2+e_4+e_5,~e_3+e_4+e_5$,\\
	$e_1+e_2+e_3+e_4+e_5$.\\ \\
	}  
	&
\makecell{ \\
	Yes.}  
\\
\hline

$\left[ \begin{array}{c}
1\; 1\; 1\; 1\; 1  \\
2\; 2\; 2\; 2\; 2 \\
3\; 3\; 3\; 3\; 4 \\ 
4\; 4\; 4\; 4\;3 \\
5\; 5\; 5\; 5\; 5
\end{array} \right] $ 
& 
\makecell{\\
    $(1-\alpha-2\beta-\gamma) e_1+\alpha e_2+\beta(e_3+e_4)+\gamma e_5,$ \\ 
    $(1- \alpha-\beta-\gamma) e_1+\alpha e_2+\beta  e_3+\gamma e_4.$
\\
\\
}
&
\makecell{ \\
	No.} 
&
\makecell{\\
$e_1,~e_2,~e_3,~e_4,~e_5$,\\
	$e_1+e_2+e_3,~e_1+e_2+e_4$,\\
	$e_1+e_2+e_5,~e_1+e_3+e_4$,\\
	$e_2+e_3+e_4,~e_3+e_4+e_5$,\\
	$e_1+e_2+e_3+e_4+e_5$.\\ \\
	}  
	&
\makecell{ \\
	Yes. }  
\\
\hline

$\left[ \begin{array}{c}
1\; 1\; 1\; 1\; 1  \\
2\; 2\; 2\; 2\; 3 \\
3\; 3\; 3\; 3\; 4 \\ 
4\; 4\; 4\; 4\;2 \\
5\; 5\; 5\; 5\; 5
\end{array} \right] $
 & 
\makecell{\\
    $(1-3\alpha-\beta) e_1+\alpha (e_2+ e_3+e_4)+\beta e_5,$ \\ 
    $(1- \alpha-\beta-\gamma) e_1+\alpha e_2+\beta  e_3+\gamma e_4.$
\\
\\
}
&
\makecell{ \\
	No.} 
	&
\makecell{ \\$e_1,~e_2,~e_3,~e_4,~e_5$,\\
	$e_1+e_2+e_3,~e_1+e_2+e_4$,\\
	$e_1+e_3+e_4,~e_2+e_3+e_4$,\\
		$e_1+e_2+e_3+e_4+e_5$.\\ \\
	}  
	&
\makecell{ \\
	Yes. }  
\\ 
\hline
$\left[ \begin{array}{c}
1\; 1\; 1\; 1\; 2  \\
2\; 2\; 2\; 2\; 1 \\
3\; 3\; 3\; 3\; 4 \\ 
4\; 4\; 4\; 4\;3 \\
5\; 5\; 5\; 5\; 5
\end{array} \right] $
& 
\makecell{ \\
    $\alpha(e_1+e_2)+\beta(e_3+e_4)+(1-2\alpha-2\beta)e_5$, \\
        $(1- \alpha-\beta-\gamma) e_1+\alpha e_2+\beta  e_3+\gamma e_4.$ 
}
&
\makecell{ \\
	No.} 
		&
\makecell{ \\$e_1,~e_2,~e_3,~e_4,~e_5$,\\
	$e_1+e_2+e_3,~e_1+e_2+e_4$,\\
	$e_1+e_2+e_5,~e_1+e_3+e_4$,\\
	$e_2+e_3+e_4,~e_3+e_4+e_5$,\\
	$e_1+e_2+e_3+e_4+e_5$.\\ \\
	}  
	&
\makecell{ \\
	Yes. }  
\\ 
\hline
$\left[ \begin{array}{c}
1\; 1\; 1\; 1\; 2  \\
2\; 2\; 2\; 2\; 3 \\
3\; 3\; 3\; 3\; 4 \\ 
4\; 4\; 4\; 4\;1 \\
5\; 5\; 5\; 5\; 5
\end{array} \right] $
& 
\makecell{\\
    $\alpha(e_1+e_2+ e_3+e_4)+(1-4\alpha)e_5$, \\ 
    $(1- \alpha-\beta-\gamma) e_1+\alpha e_2+\beta  e_3+\gamma e_4.$
\\
\\
}
&
\makecell{ \\
	No.} 
		&
\makecell{ \\$e_1,~e_2,~e_3,~e_4,~e_5$,\\
	$e_1+e_2+e_3,~e_1+e_2+e_4$,\\
	$e_1+e_3+e_4,~e_2+e_3+e_4$,\\
	$e_1+e_2+e_3+e_4+e_5$.\\ \\
	}  
	&
\makecell{ \\
	Yes. }  
\\
\hline
$\left[ \begin{array}{c}
1\; 1\; 1\; 1\; 1  \\
2\; 2\; 2\; 2\; 2 \\
3\; 3\; 3\; 5\; 4 \\ 
4\; 4\; 5\; 4\;3 \\
5\; 5\; 4\; 3\; 5
\end{array} \right] $
& 
\makecell{\\
    $\alpha e_1+(1-\alpha-3\beta) e_2+\beta(e_3+e_4+e_5)$, \\
    $\alpha e_1+(1-\alpha-\beta)e_2+ \beta e_3$, \\
    $\alpha e_1+(1-\alpha-\beta)e_2+ \beta e_4$, \\
    $\alpha e_1+(1-\alpha-\beta)e_2+ \beta e_5.$
\\
\\
}
&
\makecell{ \\
	No.} 
		&
\makecell{ $e_1,~e_2,~e_3,~e_4,~e_5,~e_3+e_4$,\\
		$e_3+e_5,~e_4+e_5,~e_1+e_2+e_3$,\\
	$~e_1+e_2+e_4,~e_1+e_2+e_5$,\\
	$e_1+e_2+e_3+e_4+e_5$.\\
	}  
	&
\makecell{ \\
	No. }  
\\ 
\hline
$\left[ \begin{array}{c}
1\; 1\; 1\; 1\; 1  \\
2\; 2\; 2\; 3\; 3 \\
3\; 3\; 3\; 2\; 2 \\ 
4\; 4\; 4\; 4\;4 \\
5\; 5\; 5\; 5\; 5
\end{array} \right] $
& 
\makecell{\\
    $(1-\alpha-\beta)e_1+\alpha e_2+\beta e_3+\gamma(e_4-e_5)$, \\
    $(1-2\alpha-\beta-\gamma)e_1+\alpha (e_2+e_3)+\beta e_4+\gamma e_5.$
\\
\\
}
&
\makecell{ \\
	No.}
		&
\makecell{ \\$e_1,~e_2,~e_3,~e_4,~e_5$,\\
	$e_1+e_2+e_3,~e_1+e_4+e_5$,\\
	$e_2+e_3+e_4,~e_2+e_3+e_5$,\\
	$e_2+e_4+e_5,~e_3+e_4+e_5$,\\
	$e_1+e_2+e_3+e_4+e_5$.\\ \\
	}  
	&
\makecell{ \\
	Yes. }   
\\
  \hline
$\left[ \begin{array}{c}
1\; 1\; 1\; 1\; 1  \\
2\; 2\; 2\; 3\; 3 \\
3\; 3\; 3\; 2\; 2 \\ 
4\; 5\; 5\; 4\;4 \\
5\; 4\; 4\; 5\; 5
\end{array} \right] $
& 
\makecell{\\
$e_1+\alpha(e_2- e_3)+\beta(e_4-e_5)$, \\
 $(1-\alpha-\beta)e_1+\alpha e_4+\beta e_5$, \\
$(1-\alpha-\beta)e_1+\alpha e_2+\beta e_3$, \\ 
$(1-2\alpha-2\beta)e_1+\alpha (e_2+e_3)+\beta(e_4+e_5).$
\\
\\
}
&
\makecell{ \\
	No.} 
		&
\makecell{ $e_1,~e_2,~e_3,~e_4,~e_5$,\\
	$e_1+e_2+e_3,~e_1+e_4+e_5$,\\
	$e_2+e_3+e_4,~e_2+e_3+e_5$,\\
	$e_2+e_4+e_5,~e_3+e_4+e_5$,\\
	$e_1+e_2+e_3+e_4+e_5$.\\ 
	}  
	&
\makecell{ \\
	Yes. }  
\\ 
\hline
\end{tabular}
\end{center}
\end{table}


\begin{table}[H]
\caption{Idempotents for quandles of order  $5$.}
\label{Table4}
\begin{center}
\begin{tabular}{ |c|c|c|c|c|} 
	\hline
Quandle $X$  & $\mathcal{I}\big(\mathbb{Z}[X]\big)$ & Is $\mathcal{I}\big(\mathbb{Z}[X]\big)$ & $\mathcal{I}\big(\mathbb{Z}_2[X]\big)$ & Is $\mathcal{I}\big(\mathbb{Z}_2[X]\big)$ \\ 
&  & a quandle? &  & a quandle? \\ 
\hline
$\left[ \begin{array}{c}
1\; 1\; 1\; 2\; 2  \\
2\; 2\; 2\; 1\; 1 \\
3\; 3\; 3\; 3\; 3 \\ 
4\; 4\; 5\; 4\;4 \\
5\; 5\; 4\; 5\; 5
\end{array} \right] $
& 
\makecell{
    $\alpha(e_1+e_2)+\beta e_4 +(1-2\alpha-\beta)e_5$, \\
    $\alpha(e_1+e_2)+(1-2\alpha-2\beta)e_3+\beta(e_4+e_5)$, \\
    $\alpha e_1+\beta e_2 +(1-\alpha-\beta)e_3$, \\ 
    $\alpha e_1+(1-\alpha)e_2+\beta(e_4-e_5)$,
\\
\\
}
&
\makecell{No.}
	&
\makecell{ \\
$e_1,~e_2,~e_3,~e_4,~e_5$,\\
	$e_1+e_2+e_3,~e_1+e_2+e_4$,\\
	$e_1+e_2+e_5,~e_1+e_4+e_5$,\\
	$e_2+e_4+e_5,~e_3+e_4+e_5$,\\
	$e_1+e_2+e_3+e_4+e_5$.\\ \\
	}  
	&
\makecell{ \\
	Yes. }  
\\ 
  \hline
$\left[ \begin{array}{c}
1\; 1\; 1\; 2\; 2  \\
2\; 2\; 2\; 3\; 3 \\
3\; 3\; 3\; 1\; 1 \\ 
4\; 4\; 4\; 4\;4 \\
5\; 5\; 5\; 5\; 5
\end{array} \right] $
&  
\makecell{
    $\alpha (e_1+e_2+e_3)+(1-3\alpha-\beta)e_4+\beta e_5$, \\ 
    $(1-\alpha-\beta)e_1+\alpha e_2+ \beta e_3 +\gamma (e_4-e_5).$
\\
\\
}
&
\makecell{No.}
	&
\makecell{ \\$e_1,~e_2,~e_3,~e_4,~e_5$,\\
	$e_1+e_2+e_3,~e_1+e_4+e_5$,\\
	$e_2+e_4+e_5,~e_3+e_4+e_5$,\\
	$e_1+e_2+e_3+e_4+e_5$.\\ \\
	}  
	&
\makecell{ \\
	Yes. }  
\\ 
\hline
$\left[ \begin{array}{c}
1\; 1\; 1\; 2\; 3  \\
2\; 2\; 2\; 3\; 1 \\
3\; 3\; 3\; 1\; 2 \\ 
4\; 4\; 4\; 4\;4 \\
5\; 5\; 5\; 5\; 5
\end{array} \right] $
& 
\makecell{\\
    $\alpha(e_1+e_2 + e_3)+(1-3\alpha)e_4$, \\ 
    $\alpha(e_1+e_2 + e_3)+(1-3\alpha)e_5$, \\ 
    $\alpha e_1+\beta e_2+(1-\alpha -\beta)e_3.$\\
\\
\\
}
&
\makecell{No.} 
	&
\makecell{ \\
$e_1,~e_2,~e_3,~e_4,~e_5$,\\	
$e_1+e_2+e_3$,\\
$e_1+e_2+e_3+e_4+e_5$.\\ \\
	}  
	&
\makecell{ \\
	No. }  
\\
  \hline
 
$\left[ \begin{array}{c}
1\; 1\; 1\; 1\; 1\\
2\; 2\; 2\; 2\; 2\\
3\; 3\; 3\; 3\; 3\\
5\; 5\; 5\; 4\; 4\\
4\; 4\; 4\; 5\; 5
\end{array} \right] $
& 
\makecell{
    $-(\alpha+\beta)e_1+\alpha e_2+\beta e_3+(1-\gamma)e_4+\gamma e_5$, \\ 
    $(1-\alpha-\beta -2\gamma)e_1+\alpha e_2+\beta e_3+\gamma(e_4+e_5).$
\\
\\
}
&
\makecell{No.} 
	&
\makecell{ \\
	$e_1,~e_2,~e_3,~e_4,~e_5$,\\
	$e_1+e_2+e_3,~e_1+e_2+e_4$,\\
	$e_1+e_2+e_5,~e_1+e_3+e_4$,\\
	$e_1+e_3+e_5,~e_1+e_4+e_5$,\\
	$e_2+e_3+e_4,~e_2+e_3+e_5$,\\
	$e_2+e_4+e_5,~e_3+e_4+e_5$,\\
	$e_1+e_2+e_3+e_4+e_5$.\\ \\
	}  
	&
\makecell{ \\
	Yes. }  
\\ 
\hline
$\left[ \begin{array}{c}
1\; 1\; 1\; 1\; 1\\
2\; 2\; 2\; 3\; 3\\
3\; 3\; 3\; 2\; 2\\
5\; 5\; 5\; 4\; 4\\
4\; 4\; 4\; 5\; 5
\end{array} \right] $
&  
\makecell{
    $-2\alpha e_1+\alpha(e_2+e_3)+\beta e_4+(1-\beta)e_5$, \\ 
    $(1-\alpha -\beta)e_1+\alpha e_2+\beta e_3$, \\ 
    $(1-2\alpha-2\beta)e_1+\alpha(e_2+e_3)+\beta(e_4+e_5).$
\\
\\
}
&
\makecell{No.}  
	&
\makecell{ \\	
$e_1,~e_2,~e_3,~e_4,~e_5$,\\
$e_1+e_2+e_3,~e_1+e_4+e_5$,\\
$e_2+e_3+e_4,~e_2+e_3+e_5$,\\
$e_2+e_4+e_5,~e_3+e_4+e_5$,\\
$e_1+e_2+e_3+e_4+e_5$.\\ \\
	}  
	&
\makecell{ \\
	Yes.}  
\\
\hline
$\left[ \begin{array}{c}
1\; 1\; 1\; 2\; 2\\
2\; 2\; 2\; 3\; 3\\
3\; 3\; 3\; 1\; 1\\
5\; 5\; 5\; 4\; 4\\
4\;  4\;  4\;  5\;  5
\end{array} \right] $
& 
\makecell{
$(1-\alpha -\beta)e_1+\alpha e_2+\beta e_3$,\\
    $\alpha e_4+(1-\alpha)e_5$, \\ 
    $-(1+2\alpha)(e_1+e_2+e_3)+(2+3\alpha)(e_4+e_5).$
\\
\\
}
&
\makecell{No.} 
	&
\makecell{ \\$e_1,~e_2,~e_3,~e_4,~e_5$,\\
	$e_1+e_2+e_3,~e_1+e_4+e_5$,\\
	$e_2+e_4+e_5,~e_3+e_4+e_5$,\\
		$e_1+e_2+e_3+e_4+e_5$.\\ \\
	}  
	&
\makecell{ \\
	Yes. }  
\\ 
\hline
$\left[ \begin{array}{c}
1\; 1\; 1\; 1\; 1\\
2\; 2\; 5\; 3\; 4\\
3\; 4\; 3\; 5\; 2\\
4\; 5\; 2\; 4\; 3\\
5\; 3\; 4\; 2\; 5
\end{array} \right] $ 
& 
\makecell{\\
    $e_1+\alpha(e_2+e_3-e_4-e_5)$, \\ 
    $e_1+\alpha(e_2-e_3+e_4-e_5)$, \\
    $e_1+\alpha(e_2-e_3-e_4+e_5)$, \\  
    $\alpha e_1+(1-\alpha)e_2$, \\ 
    $\alpha e_1+(1-\alpha)e_3$, \\ 
    $\alpha e_1+(1-\alpha)e_4$, \\ 
    $\alpha e_1+(1-\alpha)e_5$, \\ 
    $(1-4\alpha)e_1+\alpha(e_2+e_3+e_4+e_5).$
\\
\\
}
&
\makecell{No.}   
	&
\makecell{  $e_1,~e_2,~e_3,~e_4,~e_5$,\\
	$e_1+e_2+e_3+e_4+e_5$.\\ 
	}  
	&
\makecell{ \\
	No.}  
\\ 
\hline
$\left[ \begin{array}{c}
1\; 1\; 2\; 2\; 2\\
2\; 2\; 1\; 1\; 1\\
3\; 3\; 3\; 3\; 4\\
4\; 4\; 4\; 4\; 3\\
5\; 5\; 5\; 5\; 5
\end{array} \right] $
& 
\makecell{
    $\alpha(e_1+e_2)+\beta(e_3+e_4)+(1-2\alpha-2\beta)e_5$, \\ 
    $\alpha(e_1+e_2)+\beta e_3+(1-2\alpha-\beta)e_4$, \\ 
    $\alpha e_1+(1-\alpha)e_2+\beta(e_3+e_4-2e_5)$, \\ 
    $\alpha e_1+(1-\alpha)e_2+\beta(e_3 -e_4).$
\\
\\
}
&
\makecell{No.}
	&
\makecell{ \\	$e_1,~e_2,~e_3,~e_4,~e_5$,\\
	$e_1+e_2+e_3,~e_1+e_2+e_4$,\\
	$e_1+e_2+e_5,~e_1+e_3+e_4$,\\
	$e_2+e_3+e_4,~e_3+e_4+e_5$,\\
	$e_1+e_2+e_3+e_4+e_5$.\\ \\
	}  
	&
\makecell{ \\
	Yes.}  
\\ 
\hline
\end{tabular}
\end{center}
\end{table}


\begin{table}
\caption{Idempotents for quandles of order  $5$.}
\label{Table4}
\begin{center}
\begin{tabular}{ |c|c|c|c|c|} 
	\hline
Quandle $X$  & $\mathcal{I}\big(\mathbb{Z}[X]\big)$ & Is $\mathcal{I}\big(\mathbb{Z}[X]\big)$ & $\mathcal{I}\big(\mathbb{Z}_2[X]\big)$ & Is $\mathcal{I}\big(\mathbb{Z}_2[X]\big)$ \\ 
&  & a quandle? &  & a quandle? \\ 
\hline
$\left[ \begin{array}{c}
1\; 1\; 2\; 2\; 2\\
2\; 2\; 1\; 1\; 1\\
3\; 3\; 3\; 5\; 4\\
4\; 4\; 5\; 4\; 3\\
5\; 5\; 4\; 3\; 5
\end{array} \right] $
& 
\makecell{\\
    $\alpha(e_1+e_2)+(1-2\alpha)e_3$, \\ 
    $\alpha(e_1+e_2)+(1-2\alpha)e_4$, \\ 
    $\alpha(e_1+e_2)+(1-2\alpha)e_5$, \\ 
    $\alpha e_1+(1-\alpha)e_2$, \\
    $(2+3\alpha)(e_1+e_2)-(1+2\alpha)(e_3+e_4+e_5).$
\\
\\
}
&
\makecell{No.}
	&
\makecell{ $e_1,~e_2,~e_3,~e_4,~e_5$,\\
	$e_1+e_2,~e_3+e_5,~e_4+e_5$,\\
	$e_1+e_2+e_3,~e_1+e_2+e_4$,\\
	$e_1+e_2+e_5,~e_3+e_4+e_5$,\\
	$e_1+e_2+e_3+e_4+e_5$.\\ 
	}  
	&
\makecell{ 	
	No. }  
\\
 \hline
$\left[ \begin{array}{c}
1\; 1\; 2\; 2\; 2\\
2\; 2\; 1\; 1\; 1\\
3\; 3\; 3\; 3\; 3\\
5\; 5\; 5\; 4\; 4\\
4\; 4\; 4\; 5\; 5
\end{array} \right] $ 
& 
\makecell{
    $\alpha(e_1+e_2)-2\alpha e_3+\beta e_4+(1-\beta)e_5$, \\
    $\alpha(e_1+e_2)+(1-2\alpha-2\beta) e_3+\beta(e_4+e_5)$,\\
    $\alpha e_1+(1-\alpha) e_2-e_3+\beta e_4+(1-\beta)e_5$, \\ 
    $\alpha e_1+(1-\alpha) e_2-2\beta e_3+\beta (e_4+e_5)$.
    \\
    \\
}
&
\makecell{No.}
	&
\makecell{\\ $e_1,~e_2,~e_3,~e_4,~e_5$,\\
	$e_1+e_2+e_3,~e_1+e_2+e_4$,\\
	$e_1+e_2+e_5,~e_1+e_3+e_4$\\
	$e_1+e_3+e_5,~e_1+e_4+e_5$,\\
	$e_2+e_3+e_4,~e_2+e_3+e_5$,\\
	$e_2+e_4+e_5,~e_3+e_4+e_5$,\\
	$e_1+e_2+e_3+e_4+e_5$.\\ \\
	}  
	&
\makecell{ \\
	Yes. }  
\\
  \hline
$\left[ \begin{array}{c}
1\; 1\; 2\; 2\; 2\\
2\; 2\; 1\; 1\; 1\\
4\; 5\; 3\; 5\; 4\\
5\; 3\; 5\; 4\; 3\\
3\; 4\; 4\; 3\; 5
\end{array} \right] $
& 
\makecell{\\
    $e_3+\alpha(e_1+e_2-e_4-e_5)$, \\ 
    $e_4+\alpha(e_1+e_2-e_3-e_5)$, \\ 
    $e_5+\alpha(e_1+e_2-e_3-e_4)$, \\ 
    $\alpha e_1+(1-\alpha)e_2$,\\ 
    $(2+3\alpha)(e_1+e_2)-(1+2\alpha)(e_3+e_4+e_5).$
\\
\\
}
&
\makecell{No.}
	&
\makecell{ $e_1,~e_2,~e_3,~e_4,~e_5,~e_3+e_4$,\\
	$e_3+e_5,~e_4+e_5,~e_3+e_4+e_5$,\\
	$e_1+e_2+e_3+e_4+e_5$.\\ 
	}  
	&
\makecell{ \\
	No.}   
\\
\hline
$\left[ \begin{array}{c}
1\; 3\; 4\; 5\; 2\\
3\; 2\; 5\; 1\; 4\\
4\; 5\; 3\; 2\; 1\\
5\; 1\; 2\; 4\; 3\\
2\; 4\; 1\; 3\; 5
\end{array} \right] $
&   
\makecell{
    $e_1$, 
    $e_2$, 
    $e_3$, 
    $e_4$, 
    $e_5$.
\\
\\
}
&
\makecell{Yes.}

	&
\makecell{\\
 $e_1,~e_2,~e_3,~e_4,~e_5$,\\
$e_1+e_4,~e_1+e_5,~e_2+e_3$,\\
$e_2+e_4,~e_2+e_5,~e_3+e_4$,\\
$e_3+e_5,~e_1+e_2+e_5$,\\
$e_1+e_3+e_4,~e_1+e_3+e_5$,\\
		$e_1+e_4+e_5,~e_2+e_3+e_4$,\\
		$e_2+e_3+e_5,~e_2+e_4+e_5$,\\
		$e_3+e_4+e_5,~e_1+e_2+e_3+e_4$,\\
		$e_1+e_2+e_3+e_5,~e_1+e_2+e_4+e_5$,\\
		$e_1+e_3+e_4+e_5,~e_2+e_3+e_4+e_5$,\\
		$e_1+e_2+e_3+e_4+e_5$.\\ \\
	}  
	&
\makecell{ \\
	No.}  
\\
  \hline
$\left[ \begin{array}{c}
1\; 4\; 5\; 3\; 2\\
3\; 2\; 4\; 5\; 1\\
2\; 5\; 3\; 1\; 4\\
5\; 1\; 2\; 4\; 3\\
4\; 3\; 1\; 2\; 5
\end{array} \right] $
&   
\makecell{
    $e_1$,
    $e_2$, 
    $e_3$, 
    $e_4$, 
    $e_5$.
\\
\\
    }
&
\makecell{Yes.}
	&
\makecell{ \\ $e_1,~e_2,~e_3,~e_4,~e_5,~e_1+e_2+e_3+e_4$,\\
	$e_1+e_2+e_3+e_5,~e_1+e_2+e_4+e_5$,\\
	$e_1+e_3+e_4+e_5,~e_2+e_3+e_4+e_5$,\\
	$e_1+e_2+e_3+e_4+e_5$.\\ \\
	}  
	&
\makecell{ \\
	No.}  
\\
\hline
$\left[ \begin{array}{c}
1\; 4\; 5\; 2\; 3\\
3\; 2\; 1\; 5\; 4\\
4\; 5\; 3\; 1\; 2\\
5\; 3\; 2\; 4\; 1\\
2\; 1\; 4\; 3\; 5
\end{array} \right] $
&   
\makecell{
    $e_1$,
    $e_2$, 
    $e_3$, 
    $e_4$, 
    $e_5$.
\\
\\
     }
 &
\makecell{Yes.}
	&
\makecell{ \\ $e_1,~e_2,~e_3,~e_4,~e_5,~e_1+e_2+e_3+e_4$,\\
	$e_1+e_2+e_3+e_5,~e_1+e_2+e_4+e_5$,\\
	$e_1+e_3+e_4+e_5,~e_2+e_3+e_4+e_5$,\\
	$e_1+e_2+e_3+e_4+e_5$.\\ \\
	}  
	&
\makecell{ \\
	No.}  
\\
\hline
\end{tabular}
\end{center}
\end{table}}}
 
 \newpage
\begin{rmk}
The following observations are apparent from the preceding tables:
\begin{enumerate}
\item It has been conjectured in \cite[Conjecture 3.10]{ENSS2022} that the integral quandle ring of a finite latin quandle has only trivial idempotents. The data shows that the conjecture holds for all quandles of order less than six.
\item For the integral quandle ring of a quandle of order less than six, the family of idempotents given by a {\it linear expression of basis elements} forms a trivial quandle with respect to the ring multiplication. Thus, each infinite set of idempotents decomposes as a union of infinite trivial quandles.
\end{enumerate}

\end{rmk}

The  tables also show that all the idempotents of integral quandle rings have augmentation value 1. On the other hand, Proposition \ref{prop commutative quandle}, preceding tables and Proposition \ref{complex dihedral idempotents} show that idempotents of a mod 2 and that of a complex quandle ring can have augmentation value 0 and 1 both. In general, the following seems to be the case for integral quandle rings.

\begin{conj}\label{augmentation one conjecture}
Any non-zero idempotent of the integral quandle ring of a quandle has augmentation value 1.
\end{conj}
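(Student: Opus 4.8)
The inclusion $\varepsilon(u)\in\{0,1\}$ for a nonzero idempotent $u$ of $\mathbb{Z}[X]$ is already recorded above, so the content of the conjecture is to rule out the value $0$; equivalently, one must show that the augmentation ideal $\Delta=\ker(\varepsilon)$ of $\mathbb{Z}[X]$ contains no nonzero idempotent. The plan is to first peel off the ``generic'' part of the problem via the orbit decomposition. For any quandle $X$, each $\Inn(X)$-orbit $O$ is a subquandle, and since $x*y=S_y(x)$ lies in the orbit of $x$, one checks that $\mathbb{Z}[O]$ is a right ideal of $\mathbb{Z}[X]$ and that $\mathbb{Z}[X]=\bigoplus_{O}\mathbb{Z}[O]$ as right $\mathbb{Z}[X]$-modules. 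Hence the projection $\pi\colon X\to X/\Inn(X)$ onto the trivial quandle of orbits induces a ring homomorphism $\pi_{*}\colon\mathbb{Z}[X]\to\mathbb{Z}[X/\Inn(X)]$ carrying $u$ to an idempotent or to $0$; by \eqref{idempotents trivial quandle} every nonzero idempotent of a trivial quandle ring has augmentation $1$, so if $\varepsilon(u)=0$ then $\pi_{*}(u)=0$, i.e.\ the coefficient sum of $u$ vanishes on every orbit. Writing $u=\sum_{O}u_{O}$ with $u_{O}\in\Delta\cap\mathbb{Z}[O]$ and projecting $u^{2}=u$ onto the summand $\mathbb{Z}[O]$ gives $u_{O}=u_{O}u$ for each $O$; unwinding the right action of $\mathbb{Z}[O']$ on $\mathbb{Z}[O]$ through the permutation action of $\Inn(X)$ then yields $u_{O}=u_{O}^{2}+\sum_{O'\ne O}T_{O'}(u_{O})$, where each $T_{O'}$ is a $\mathbb{Z}$-linear operator of total weight $\varepsilon(u_{O'})=0$ and hence maps $\mathbb{Z}[O]$ into $\Delta\cap\mathbb{Z}[O]$. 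In particular the essential case is that of a connected quandle, where the claim reads: the augmentation ideal of $\mathbb{Z}[X]$ has no nonzero idempotent. This is already known for free quandles \cite{ENSS2022} and would follow from \cite[Conjecture 3.10]{ENSS2022} when $X$ is latin.

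For the connected case I would look for an integer-valued obstruction, since integrality has to enter in an essential, non-modular way: reduction modulo a prime does not suffice, as $\Delta(\mathbb{Z}_{p}[X])$ genuinely can contain nonzero idempotents (for instance $e_{0}+e_{1}\in\mathbb{Z}_{2}[R_{3}]$). The cheap device available for trivial quandles is also unavailable: $\Delta$ is generally not nilpotent --- for $R_{3}$ one already has $\Delta^{2}=\Delta$ over $\mathbb{Q}$ --- so one cannot argue $u^{2}\in\Delta^{2}=0$. The route I would try is to combine an Archimedean estimate with the integrality and the orbit-wise vanishing, in the spirit of the proof of Lemma \ref{r3 and r5 idempotents}: rewriting $u^{2}=u$ as the system $\alpha_{z}=\sum_{y}\alpha_{S_{y}^{-1}(z)}\,\alpha_{y}$, choose $z$ realizing the largest coefficient in absolute value, estimate the right-hand side, and try to force the near-equality case, which would pin down the sign pattern of the $\alpha_{x}$ along the $\langle S_{y}\rangle$-orbits on the support of $u$; feeding that rigidity back into the augmentation constraints $\varepsilon(u)=0$ and $\varepsilon(u_{O})=0$ should, optimistically, produce an impossible relation among integers, in the same way that the relation $1=\varepsilon(u)-3\gamma$ does for $R_{3}$.

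The step I expect to be the real obstacle is precisely this last one. For a general connected quandle there is no ordering, no trace, and no nilpotency of $\Delta$ to exploit, and the elementary norm inequalities --- $\|u\|_{1}=\|u^{2}\|_{1}\le\|u\|_{1}^{2}$ and its $\ell^{2}$ analogue for latin quandles --- are too weak to conclude on their own; extracting a contradiction seems to require genuine combinatorial control of how the maps $S_{y}$ permute the support of $u$, which I do not see how to obtain in general. For this reason the statement is offered here only as a conjecture, supported by the exhaustive computations for quandles of order less than six in Section \ref{computer computations}.
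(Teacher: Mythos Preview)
The statement is recorded in the paper as a \emph{conjecture}; no proof is given there, only the computational evidence from the tables in Section~\ref{computer computations}. Your write-up is therefore in agreement with the paper: you sketch some heuristics and partial reductions but explicitly conclude that the assertion remains open.

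One comment on the content of your sketch. The orbit-decomposition step is fine: each $\Inn(X)$-orbit $O$ spans a right ideal $\mathbb{Z}[O]$, the map $\pi_{*}$ to $\mathbb{Z}[X/\Inn(X)]$ is a ring homomorphism, and together with \eqref{idempotents trivial quandle} this indeed forces $\varepsilon(u_{O})=0$ for every orbit once $\varepsilon(u)=0$. But the subsequent claim that ``the essential case is that of a connected quandle'' is not a genuine reduction. The projected equations $u_{O}=u_{O}u=u_{O}^{2}+\sum_{O'\ne O}u_{O}u_{O'}$ couple the components across orbits, and an individual $u_{O}$ need not be an idempotent of $\mathbb{Z}[O]$; so even if one knew the conjecture for every connected quandle, your outline would not yet deduce it in general. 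This does not change your bottom line---you already flag the argument as incomplete---but it is worth being precise about which step is merely heuristic.
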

 \medskip

\section{Peirce spectra of complex quandle algebras}\label{sec Peirce spectra}
In this concluding section, we discuss Peirce spectrum of complex quandle algebras of finite rank. In associative algebras, the operator induced by multiplication by an idempotent is a projection onto a subspace, and hence has eigenvalues 1 and 0. For non-associative algebras, the eigenvalues of the operator induced by an idempotent can be arbitrary in general.  Given an idempotent $u$ of a non-associative algebra $A$ over $\mathbb{C}$, let $\sigma(u)$ denote the Peirce spectrum of $u$, which is the set of all eigenvalues of the operator induced by $u$. The Peirce spectrum of an idempotent $u$ induces the Peirce decomposition of the algebra $A$, which is the decomposition of $A$ into a direct sum of corresponding eigenspaces. The Peirce spectrum $\sigma(A)$ of the algebra $A$ is the set of all possible distinct eigenvalues in $\sigma(u)$ as $u$ runs over all idempotents of $A$. Though challenging to compute in general, the Peirce spectrum is well-understood for some classes of algebras. For example, if $A$ is a power-associative algebra, then $\sigma(A)=\{0, \frac{1}{2}, 1\}$ \cite{MR0210757}. Similarly, if $A$ is a Hsiang algebra (which appears in the classification of cubic minimal cones \cite[Chapter 6]{MR3243534}), then $\sigma(A) = \{1, -1, \frac{1}{2}, -\frac{1}{2} \}$. See \cite{MR4191467} for a nice survey about idempotents of non-associative algebras.
\par

Note that, quandle algebras are non-commutative in general. Since our quandles are right-distributive, we consider right multiplication operators induced by idempotents. The quandle algebra of a trivial quandle is associative and it follows from the description of its idempotents (see \eqref{idempotents trivial quandle}) that its Peirce spectrum is simply $\{ 1\}$. On the other hand, quandle algebra of a non-trivial quandle is non-associative, in fact, it is not power-associative \cite{MR3915329}. This, together with the fact that quandle algebras are generated by idempotents, makes it interesting to explore their Peirce spectra.
\par

Given a quandle $X$ and an idempotent $u \in \mathbb{C}[X]$, let $S_u: \mathbb{C}[X] \to \mathbb{C}[X]$ be the $\mathbb{C}$-linear map $S_u(w)= wu$.

\begin{pro}
Let $X$ be a finite quandle and $u= \sum_{i=1}^n \alpha_i e_i$ an idempotent of $ \mathbb{C}[X]$. Then ${\rm Trace}(S_u)=  \sum_{i=1}^n \alpha_i |{\rm Fixed}(S_{e_i})|$, where ${\rm Fixed}(S_{e_i})$ is the set of basis elements fixed by $S_{e_i}$. Further, if $X$ is latin, then ${\rm Trace}(S_u)=\varepsilon(u)=0,1$.
\end{pro}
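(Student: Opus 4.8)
The plan is to compute the trace of $S_u$ directly from its matrix in the basis $\{e_1, \dots, e_n\}$. For a single basis idempotent $e_i$, the map $S_{e_i}$ sends $e_j \mapsto e_j * e_i = e_{S_i(j)}$, so its matrix is the permutation matrix of the right multiplication $S_i \in \Inn(X)$; its diagonal entry in position $j$ is $1$ precisely when $e_j * e_i = e_j$, i.e. when $e_j \in {\rm Fixed}(S_{e_i})$, and $0$ otherwise. Hence ${\rm Trace}(S_{e_i}) = |{\rm Fixed}(S_{e_i})|$. Now write $u = \sum_{i=1}^n \alpha_i e_i$; since $w \mapsto wu$ is linear in $u$, we have $S_u = \sum_{i=1}^n \alpha_i S_{e_i}$ as linear operators, and taking traces gives
$$
{\rm Trace}(S_u) = \sum_{i=1}^n \alpha_i\, {\rm Trace}(S_{e_i}) = \sum_{i=1}^n \alpha_i\, |{\rm Fixed}(S_{e_i})|,
$$
which is the first assertion. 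This part is essentially bookkeeping with permutation matrices and should present no obstacle.

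For the second assertion, suppose $X$ is latin, so each left multiplication $L_x$ is a bijection. First I would record the key consequence: in a latin quandle, $x * y = x$ forces $y = x$. Indeed, $x * x = x$ always, so $x * y = x = x * x$ and injectivity of $L_x$ gives $y = x$. Therefore ${\rm Fixed}(S_{e_i})$ consists only of $e_i$ itself, i.e. $|{\rm Fixed}(S_{e_i})| = 1$ for every $i$. Substituting into the formula just proved,
$$
{\rm Trace}(S_u) = \sum_{i=1}^n \alpha_i \cdot 1 = \sum_{i=1}^n \alpha_i = \varepsilon(u).
$$
Finally, since $u$ is an idempotent of $\mathbf{k}[X]$ (here $\mathbf{k} = \mathbb{C}$), the augmentation map $\varepsilon$ is a ring homomorphism, so $\varepsilon(u) = \varepsilon(u^2) = \varepsilon(u)^2$, forcing $\varepsilon(u) \in \{0, 1\}$, as already noted in Section~\ref{idempotents of quandle rings}. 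This completes the proof.

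The only point requiring any care is the latin-quandle implication $x*y = x \Rightarrow y = x$; everything else is routine linear algebra and the already-established fact that idempotents have augmentation $0$ or $1$. I do not anticipate a genuine obstacle, though one should double-check that $S_u$ is literally the $\mathbf{k}$-linear extension of $w \mapsto wu$ so that linearity in $u$ is legitimate — this is immediate from the bilinearity of the quandle-ring multiplication.
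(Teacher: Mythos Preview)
Your proof is correct and follows essentially the same approach as the paper's own argument: linearity gives $S_u=\sum_i \alpha_i S_{e_i}$, each $S_{e_i}$ is a permutation matrix with trace $|{\rm Fixed}(S_{e_i})|$, and in the latin case each such fixed-point set is a singleton. The paper's version is terser, simply asserting $|{\rm Fixed}(S_{e_i})|=1$ for latin $X$ without the $L_x$-injectivity justification you supply, but the underlying reasoning is identical.
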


\begin{proof}
Note that each $S_{e_i}$ is a permutation of the basis elements, and $$S_u(w)=\sum_{i=1}^n \alpha_i (w e_i)= \sum_{i=1}^n \alpha_i S_{e_i}(w)= \big(\sum_{i=1}^n \alpha_i S_{e_i}\big)(w).$$
Thus, we have $${\rm Trace}(S_u)=\sum_{i=1}^n \alpha_i {\rm Trace}(S_{e_i})= \sum_{i=1}^n \alpha_i |{\rm Fixed}(S_{e_i})|.$$ In addition, if $X$ is latin, then $|{\rm Fixed}(S_{e_i})|=1$, and hence ${\rm Trace}(S_u)=\varepsilon(u)=0,1$.
\end{proof}

Next, we compute idempotents and Peirce spectra of complex quandle algebras of rank three. 

\begin{pro}\label{complex non-latin order 3 idempotents}
Let $X$ be the quandle with multiplication table 
$$X=
\begin{pmatrix} 
1 & 1 & 2 \\
2 & 2 & 1 \\
3 & 3 & 3 \\
\end{pmatrix}.
$$
Then $\mathcal{I}(\mathbb{C}[X]) = \big\{ (1-\beta) e_1 +  \beta e_2, ~ \alpha e_1 + \alpha e_2 +  (1 - 2 \alpha) e_3 ~\mid~ \beta, \alpha \in \mathbb{C} \big\}$.
\end{pro}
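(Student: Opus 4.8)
The plan is a direct computation organized by the augmentation value. Write a general element of $\mathbb{C}[X]$ as $u = \alpha e_1 + \beta e_2 + \gamma e_3$. Reading the products $e_i e_j$ off the displayed table — so $e_1 e_1 = e_1 e_2 = e_1$, $e_2 e_1 = e_2 e_2 = e_2$, $e_1 e_3 = e_2$, $e_2 e_3 = e_1$, and $e_3 e_1 = e_3 e_2 = e_3 e_3 = e_3$ — one expands $u^2$ and collects coefficients, so that $u^2 = u$ becomes the system
\[
\alpha = \alpha^2 + \alpha\beta + \beta\gamma,\qquad \beta = \beta^2 + \alpha\beta + \alpha\gamma,\qquad \gamma = \gamma^2 + \alpha\gamma + \beta\gamma .
\]
Because $\varepsilon$ is a ring homomorphism, every idempotent satisfies $\varepsilon(u) = \alpha + \beta + \gamma \in \{0,1\}$, and this is the extra input that makes the system manageable.

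First I would dispose of the augmentation-$0$ case: the third equation factors as $\gamma(\alpha+\beta+\gamma-1) = 0$, which here forces $\gamma = 0$ (the second factor is $-1$); then $\beta = -\alpha$, and the first equation collapses to $\alpha = 0$, so $u = 0$, contradicting that idempotents are non-zero. In the augmentation-$1$ case I substitute $\alpha + \beta = 1 - \gamma$ into the first equation to obtain $\gamma(\beta - \alpha) = 0$. If $\gamma = 0$ then $u = \alpha e_1 + (1-\alpha)e_2$; if instead $\alpha = \beta$ then $2\alpha + \gamma = 1$ gives $u = \alpha e_1 + \alpha e_2 + (1-2\alpha)e_3$. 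A one-line substitution shows the remaining equation holds automatically in each sub-case, and conversely that every element of the two displayed families squares to itself. This yields exactly the claimed set, the two families meeting precisely at $u = \tfrac{1}{2}(e_1 + e_2)$.

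There is no genuine obstacle here beyond careful bookkeeping; the only thing to watch is that the case split be exhaustive and that the parametrizations capture each solution (up to the noted overlap). A more structural route is available — $\{1,2\}$ is a trivial subquandle of $X$, so \eqref{idempotents trivial quandle} already accounts for the first family, and the covering and disjoint-union techniques recalled in Section \ref{idempotents of quandle rings} handle the interaction with $e_3$ — but for a rank-three algebra the direct coefficient comparison is shorter and entirely self-contained, so that is the approach I would present.
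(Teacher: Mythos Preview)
Your proof is correct and follows essentially the same approach as the paper's: both write $u=\alpha e_1+\beta e_2+\gamma e_3$, derive the identical three-equation system, and solve it by a short case split. The only cosmetic difference is that you organize cases by augmentation value (using that $\varepsilon$ is a ring homomorphism), whereas the paper splits on whether $\gamma=0$; since the third equation $\gamma=\gamma(\alpha+\beta+\gamma)$ forces $\varepsilon(u)=1$ whenever $\gamma\neq0$, the two case divisions are equivalent.
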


\begin{proof}
Let $u = \alpha e_1 + \beta  e_2 + \gamma e_3 \in \mathbb{C}[X]$ be a non-zero idempotent. Then the equality $u=u^2$ gives $$\alpha = \alpha^2 + \alpha \beta + \beta \gamma, \quad  \beta = \beta^2 + \alpha \beta +  \alpha \gamma \quad \textrm{and} \quad  \gamma = \gamma^2 + \alpha \gamma + \beta \gamma.$$
If $\gamma = \alpha=0$, then $\beta = 1$, and hence $u = e_2$. If $\gamma=0$ and $\alpha \neq 0$, then $\alpha = 1 - \beta$ and $u= (1 - \beta) e_1 +  \beta e_2$. Finally, if $\gamma \neq 0$, then the last equation of the above system gives $\gamma = 1 - \alpha - \beta$. Substituting value of $\gamma$ in the first equation gives $(\alpha - \beta) (\alpha + \beta) = (\alpha - \beta)$, which further implies that $\alpha = \beta$. Hence, we have $u= \alpha e_1 + \alpha e_2 +  (1 - 2 \alpha) e_3$, which completes the proof.
\end{proof}

\begin{pro}\label{complex dihedral idempotents}
Let $X$ be the quandle with multiplication table 
$$X=
\begin{pmatrix} 
1 & 3 & 2 \\
3 & 2 & 1 \\
2 & 1 & 3 \\
\end{pmatrix}.
$$
Then 
\begin{small}
$$\mathcal{I}(\mathbb{C}[X]) = \Big\{ e_1,~ e_2,~ e_3, ~\frac{1}{3}e_1+ \frac{1}{3}e_2+ \frac{1}{3}e_3,~ -\frac{1}{3}e_1 -\frac{1}{3}e_2+ \frac{2}{3}e_3,~ -\frac{1}{3}e_1 +\frac{2}{3}e_2- \frac{1}{3}e_3,~ \frac{2}{3}e_1 -\frac{1}{3}e_2- \frac{1}{3}e_3 \Big\}.$$
\end{small}
\end{pro}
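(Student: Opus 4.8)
The plan is to run the same comparison-of-coefficients argument as in the proof of Lemma \ref{r3 and r5 idempotents}, but now chasing all complex solutions. I would start with an arbitrary non-zero $u = \alpha e_1 + \beta e_2 + \gamma e_3 \in \mathbb{C}[X]$ and expand $u^2$ using the displayed multiplication table. Since $X$ is the dihedral quandle $R_3$, each basis element $e_k$ occurs as $e_{i*j}$ for exactly three ordered pairs $(i,j)$ — one on the diagonal and one for each of the two transpositions fixing $k$ — so $u = u^2$ is equivalent to the $S_3$-symmetric system
\[
\alpha = \alpha^2 + 2\beta\gamma, \qquad \beta = \beta^2 + 2\gamma\alpha, \qquad \gamma = \gamma^2 + 2\alpha\beta.
\]

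Subtracting these equations in pairs gives $(\alpha-\beta)(\alpha+\beta-2\gamma-1) = 0$ together with its two cyclic companions, so for each pair of coordinates either the two coordinates coincide or the relation ``their sum minus twice the third coordinate equals $1$'' holds. If no two of $\alpha,\beta,\gamma$ coincide, then all three such relations hold, and adding them yields $0 = 3$, a contradiction; hence at least two coordinates must be equal. If all three equal some $\lambda$, the first equation becomes $\lambda = 3\lambda^2$, forcing $\lambda = 1/3$ (the value $\lambda = 0$ being excluded), so $u = \tfrac13(e_1+e_2+e_3)$. If exactly two coincide, say $\alpha = \beta \ne \gamma$ (the remaining cases following by symmetry), the relation forced by the unequal pairs reads $\gamma - \alpha = 1$, and substituting $\gamma = \alpha + 1$ into $\gamma = \gamma^2 + 2\alpha^2$ gives $\alpha(3\alpha+1) = 0$. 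Thus $\alpha = 0$, yielding $u = e_3$, or $\alpha = -1/3$, yielding $u = -\tfrac13 e_1 - \tfrac13 e_2 + \tfrac23 e_3$; in both cases $\gamma \ne \alpha$, so these genuinely belong to the ``exactly two equal'' case. Cycling through the three choices of which pair is equal produces $e_1, e_2, e_3$ together with the three idempotents having one coefficient $2/3$ and two coefficients $-1/3$, and adjoining $\tfrac13(e_1+e_2+e_3)$ gives exactly the list in the statement. I would close by checking, via a direct substitution, that each of the seven elements really is an idempotent, since only some of the three defining equations were used to locate them.

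The computation is short, so there is no serious obstacle; the points that demand care are the bookkeeping of the case analysis — making sure the ``exactly two equal'' solutions do not secretly collapse into the ``all equal'' case (the root $\alpha = 1/3$ that appears if one tries $\alpha = \beta$ together with $\alpha + 2\gamma = 1$ is discarded precisely because it forces $\gamma = \alpha$), and confirming that the located points satisfy all three equations rather than only those used in the derivation. As an alternative one may first sum the three equations and use $\alpha^2+\beta^2+\gamma^2 = (\alpha+\beta+\gamma)^2 - 2(\alpha\beta+\beta\gamma+\gamma\alpha)$ to recover $\varepsilon(u) \in \{0,1\}$ and then argue as in Lemma \ref{r3 and r5 idempotents}; or one may use the structural fact that $\mathbf{1} = e_1+e_2+e_3$ satisfies $w\mathbf{1} = \mathbf{1}w = \varepsilon(w)\mathbf{1}$ for all $w$ (a consequence of $R_3$ being latin), so that the augmentation-zero subspace is a $2$-dimensional subalgebra in which the idempotent equation is immediate, with the idempotents of $\mathbb{C}[X]$ being $\tfrac13\mathbf{1}$ together with $v$ and $\tfrac13\mathbf{1}+v$ as $v$ ranges over the non-zero idempotents of that subalgebra.
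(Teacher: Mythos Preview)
Your argument is correct. The system you derive and the difference factorisations $(\alpha-\beta)(\alpha+\beta-2\gamma-1)=0$ (and cyclic) are accurate, the impossibility of ``no two equal'' via adding the three linear relations is valid, and the case $\alpha=\beta\neq\gamma$ indeed reduces to $\gamma=\alpha+1$ and $\alpha(3\alpha+1)=0$, giving exactly the claimed solutions after cycling.

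The paper takes the organisationally different route you mention at the end as an alternative: it first splits on the augmentation value $\varepsilon(u)\in\{0,1\}$, substitutes $\gamma=-\alpha-\beta$ or $\gamma=1-\alpha-\beta$, and then subtracts pairs of equations within each augmentation case. Your primary argument instead splits on the coincidence pattern of the three coordinates, which exploits the $S_3$-symmetry of the system more directly and avoids ever naming $\varepsilon(u)$; the paper's split, on the other hand, makes the augmentation-$0$ versus augmentation-$1$ dichotomy of the seven idempotents visible from the outset. Both are short direct solutions of the same cubic system, and your closing remark about verifying all three equations for each located point is a sensible precaution, though in fact once two of the three equations and the relation $\gamma-\alpha=1$ hold the third follows automatically.
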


\begin{proof}
Notice that $X \cong R_3$, the dihedral quandle of order three. Let $u = \alpha e_1 + \beta e_2 + \gamma e_3 \in \mathbb{C}[X]$ be a non-zero idempotent. Comparing coefficients of the basis elements in $u=u^2$ gives 
\begin{eqnarray}
\label{complex d1} \alpha &=& \alpha^2 + 2 \beta \gamma,\\
\label{complex d2} \beta &=& \beta^2 + 2 \gamma \alpha,\\
\label{complex d3} \gamma &=& \gamma^2 + 2 \alpha  \beta.
 \end{eqnarray} 
If $\varepsilon(u)= 0$, then $\gamma= -\alpha -\beta$. Substituting value of $\gamma$ in \eqref{complex d1} and \eqref{complex d2} gives  
$$\alpha = \alpha^2 - 2 \beta^2 -2 \alpha \beta \quad \textrm{and} \quad \beta = \beta^2 - 2 \alpha^2 -2 \alpha \beta.$$
Subtracting gives $\alpha-\beta = 3 (\alpha-\beta)(\alpha+\beta)$. If $\alpha=\beta$, then $\alpha= -3 \alpha^2$. Hence, we have $\alpha=\beta-\frac{1}{3}$ and $\gamma=\frac{2}{3}$. If $\alpha \neq\beta$, then  $\alpha+ \beta=\frac{1}{3}$. Substituting value of $\gamma$ in \eqref{complex d3} gives $\alpha \beta =-\frac{2}{9}$. Solving the equations gives $\alpha=\frac{2}{3}$ and $\beta=\gamma=-\frac{1}{3}$ or  $\alpha=\gamma=-\frac{1}{3}$ and $\beta=\frac{2}{3}$.
\par

Now, if $\varepsilon(u)= 1$, then $\gamma= 1-\alpha -\beta$. Substituting value of $\gamma$ in \eqref{complex d1} and \eqref{complex d2} gives  
$$\alpha = \alpha^2 - 2 \beta^2 -2 \alpha \beta + 2 \beta \quad \textrm{and} \quad \beta = \beta^2 - 2 \alpha^2 -2 \alpha \beta + 2 \alpha.$$
Subtracting gives $\alpha-\beta = (\alpha-\beta)(\alpha+\beta)$.  If $\alpha=\beta$, then $\alpha= 3 \alpha^2$. This gives $\alpha=\beta=0$ and $\gamma=1$ or $\alpha=\beta=\gamma=\frac{1}{3}$. If $\alpha \neq\beta$, then
$\alpha +\beta=1$, and hence $\gamma=0$. Using \eqref{complex d3}, we obtain either $\alpha=0$ and $\beta=1$ or  $\alpha=1$ and $\beta=0$. This completes the proof.
\end{proof}

By homogenising the idempotency equation $u=u^2$, the classical Bez\'out's theorem implies  that either there are infinitely many solutions or the number of distinct solutions is at most $2^{\dim_{\mathbb{C}}(A)}$. By including the zero idempotent in the counting, propositions \ref{complex non-latin order 3 idempotents} and \ref{complex dihedral idempotents} illustrate this dichotomy.

As a consequence of propositions \ref{complex non-latin order 3 idempotents} and \ref{complex dihedral idempotents}, we have

\begin{thm}\label{Peirce spectra 3 order quandle}
Let $X$ be a non-trivial quandle of order three.
\begin{enumerate}
\item If $X$ is non-latin, then $\sigma(\mathbb{C}[X])= \mathbb{C}$.
\item If $X$ is latin, then $\sigma(\mathbb{C}[X])= \{0, 1, -1 \}$.
\end{enumerate}
\end{thm}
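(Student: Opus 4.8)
The plan is to leverage the two preceding propositions directly, since between them they list \emph{all} non-zero idempotents of $\mathbb{C}[X]$ for the two isomorphism types of non-trivial quandles of order three. The theorem then reduces to computing, for each such idempotent $u$, the eigenvalues of the right multiplication operator $S_u : \mathbb{C}[X] \to \mathbb{C}[X]$, $w \mapsto wu$, and taking the union over all $u$. For part (1) I would first record that up to isomorphism there is exactly one non-trivial non-latin quandle of order three, namely the one in Proposition~\ref{complex non-latin order 3 idempotents}; its latin counterpart (Proposition~\ref{complex dihedral idempotents}) is $R_3$, handling part (2).

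For part (1), take the one-parameter family $u_\beta = (1-\beta)e_1 + \beta e_2$. With respect to the basis $\{e_1, e_2, e_3\}$, right multiplication $S_{e_1}$ and $S_{e_2}$ both fix $e_1, e_2$ and send $e_3 \mapsto e_3$ is false — reading the table, $e_3 * e_1 = e_3$ and $e_3 * e_2 = e_3$, while $e_1*e_1 = e_1$, $e_2 * e_1 = e_2$, etc., so actually both $S_{e_1}$ and $S_{e_2}$ are the identity on basis elements except we must check the third column: $e_1 * e_3 = e_2$, $e_2 * e_3 = e_1$, $e_3 * e_3 = e_3$. Thus $S_{e_1} = S_{e_2} = \mathrm{Id}$ and $S_{e_3}$ is the transposition $(e_1\, e_2)$. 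Hence $S_{u_\beta} = (1-\beta)\,\mathrm{Id} + \beta\,\mathrm{Id} = \mathrm{Id}$ on the span of $e_1, e_2$ — I need to recompute: $S_{u_\beta}(w) = w\big((1-\beta)e_1 + \beta e_2\big) = (1-\beta)S_{e_1}(w) + \beta S_{e_2}(w)$, and since $S_{e_1} = S_{e_2} = \mathrm{Id}$, this is just $w$ for \emph{every} $\beta$. So this family contributes only the eigenvalue $1$. The eigenvalue-producing family must therefore be the other one, $v_\alpha = \alpha e_1 + \alpha e_2 + (1-2\alpha)e_3$: here $S_{v_\alpha} = \alpha\,\mathrm{Id} + \alpha\,\mathrm{Id} + (1-2\alpha)\,(e_1\,e_2) = 2\alpha\,\mathrm{Id} + (1-2\alpha)\tau$ where $\tau$ is the transposition on $\{e_1,e_2\}$ fixing $e_3$. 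The eigenvalues of $\tau$ are $1$ (on $e_1+e_2$ and on $e_3$) and $-1$ (on $e_1-e_2$), so $S_{v_\alpha}$ has eigenvalues $2\alpha + (1-2\alpha) = 1$ and $2\alpha - (1-2\alpha) = 4\alpha - 1$. As $\alpha$ ranges over $\mathbb{C}$, the value $4\alpha - 1$ ranges over all of $\mathbb{C}$, so $\sigma(\mathbb{C}[X]) = \mathbb{C}$, proving (1).

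For part (2), the quandle is $R_3$ and Proposition~\ref{complex dihedral idempotents} lists seven idempotents: the three trivial ones $e_i$ and four non-trivial ones. Since $R_3$ is latin, each $S_{e_i}$ is a permutation of $\{e_1,e_2,e_3\}$; from the table $S_{e_1}$ fixes $e_1$ and swaps $e_2 \leftrightarrow e_3$, and similarly for the others, so each $S_{e_i}$ has eigenvalues $1, 1, -1$ (being a transposition). For each non-trivial idempotent $u = \alpha e_1 + \beta e_2 + \gamma e_3$ with $\alpha+\beta+\gamma \in \{0,1\}$, write $S_u = \alpha S_{e_1} + \beta S_{e_2} + \gamma S_{e_3}$ as an explicit $3\times 3$ matrix and compute its characteristic polynomial. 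The constant vector $(1,1,1)^T$ is an eigenvector of each $S_{e_i}$ with eigenvalue $1$, hence of $S_u$ with eigenvalue $\alpha+\beta+\gamma = \varepsilon(u)$; the complementary plane $\{x+y+z = 0\}$ is invariant, and there $S_u$ acts with determinant and trace that I would compute directly — the upshot should be that on that plane the eigenvalues are $\pm 1$ (or $0$ and something, for the augmentation-zero idempotents), giving $\sigma(\mathbb{C}[R_3]) \subseteq \{0,1,-1\}$, with all three values actually attained (e.g. $1$ from any $e_i$, $-1$ appearing in the spectrum of $S_{e_i}$, and $0$ arising from an augmentation-zero idempotent such as $\frac{2}{3}e_1 - \frac{1}{3}e_2 - \frac{1}{3}e_3$). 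The main obstacle is purely computational bookkeeping: correctly reading off the permutation matrices $S_{e_i}$ from the given multiplication table (being careful that columns, not rows, index the right-multiplier) and then diagonalizing the seven resulting matrices; once the invariant decomposition $\mathbb{C}[X] = \mathbb{C}(e_1+e_2+e_3) \oplus \{x+y+z=0\}$ is exploited, each computation is a $2\times 2$ eigenvalue problem and the answer $\{0,1,-1\}$ falls out.
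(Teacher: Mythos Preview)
Your approach is essentially the paper's: list the idempotents via Propositions~\ref{complex non-latin order 3 idempotents} and~\ref{complex dihedral idempotents}, then compute the eigenvalues of each $S_u$. Part~(1) is carried out correctly and matches the paper exactly, including the eigenvalue $4\alpha-1$ from the family $v_\alpha$. For part~(2), your use of the invariant decomposition $\mathbb{C}[R_3]=\mathbb{C}(e_1+e_2+e_3)\oplus\{x+y+z=0\}$ is a clean organizing device that the paper does not make explicit (it simply reports the spectra after ``direct computations''); this genuinely reduces each check to a $2\times 2$ problem.

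One correction to your anticipated outcomes on the plane: you have the two cases swapped. For the augmentation-\emph{one} idempotent $\tfrac{1}{3}(e_1+e_2+e_3)$ one has $S_u=\tfrac{1}{3}(S_{e_1}+S_{e_2}+S_{e_3})$, which sends every $e_i$ to $\tfrac{1}{3}(e_1+e_2+e_3)$; hence the plane lies in the kernel and the plane-eigenvalues are $0,0$ (so $\sigma(u)=\{0,1\}$, as the paper records). For each augmentation-\emph{zero} idempotent the eigenvalue on $e_1+e_2+e_3$ is $0$, and the $2\times 2$ block on the plane has trace $0$ and determinant $-1$, giving plane-eigenvalues $1,-1$ (so $\sigma(u)=\{0,1,-1\}$). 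With this swap corrected, your argument yields $\sigma(\mathbb{C}[R_3])=\{0,1,-1\}$ exactly as in the paper.
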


\begin{proof}
Consider the ordering $e_1< e_2 < e_3$ of the basis of the quandle algebra. If $u=(1-\beta) e_1 +  \beta e_2$, then $S_u$ is the identity operator. If $u=\alpha e_1 + \alpha e_2 +  (1 - 2 \alpha) e_3$, then 
$S_u$ has eigenvalues 1, 1 and $4\alpha-1$. Hence, we obtain $\sigma(\mathbb{C}[X])= \mathbb{C}$.
\par
Since each $S_{e_i}$ is a permutation matrix, a routine check shows that $\sigma (e_1)=\sigma (e_2)= \sigma (e_3)=\{ 1, -1\}$. Further, direct computations give $\sigma \big(\frac{1}{3}e_1+ \frac{1}{3}e_2+ \frac{1}{3}e_3 \big)=\{0, 1\}$ and
$$\sigma \Big(-\frac{1}{3}e_1- \frac{1}{3}e_2+ \frac{2}{3}e_3 \Big)=\sigma \Big(-\frac{1}{3}e_1+ \frac{2}{3}e_2- \frac{1}{3}e_3 \Big)=\sigma \Big(\frac{2}{3}e_1 -\frac{1}{3}e_2- \frac{1}{3}e_3 \Big)=\{0, 1, -1\}.$$
Hence, we obtain $\sigma(\mathbb{C}[X])= \{0, 1, -1 \}$, which is desired.
\end{proof}

\begin{rmk}
Theorem \ref{Peirce spectra 3 order quandle}(1) shows that the Peirce spectrum of the complex quandle algebra can be as large as possible when the underlying quandle is non-latin. It is intriguing to know how arbitrary can the Peirce spectrum of the complex quandle algebra of a latin quandle be. Meanwhile, computer assisted computations show that  the set of idempotents of the complex quandle algebra of $R_5$, $R_7$ and $R_9$ consists of similar type of idempotents as for $R_3$, and that their Peirce spectra is again $\{0, 1, -1 \}$. 
\end{rmk}

In general, we believe that the following holds for each dihedral quandle of odd order.

\begin{conj}\label{Peirce spectra conjecture}
The Peirce spectrum of the complex quandle algebra of a dihedral quandle of odd order is $\{0, 1, -1 \}$.
\end{conj}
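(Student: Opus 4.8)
The plan is to diagonalise the right-multiplication operators $S_u$ using the characters of the cyclic group $\mathbb{Z}_n$, treating $\mathbb{C}[R_n]$ almost as an ordinary group algebra; the only new feature is that the quandle product $e_x e_y = e_{x * y} = e_{2y-x}$ of $R_n$ is not the group product. Fix $n \ge 3$ odd, set $\omega = e^{2\pi i/n}$, and for $k \in \mathbb{Z}_n$ put $f_k = \sum_{x \in \mathbb{Z}_n} \omega^{kx} e_x$; these form a $\mathbb{C}$-basis of $\mathbb{C}[R_n]$. A one-line computation gives $S_{e_y}(f_k) = f_k e_y = \omega^{2ky} f_{-k}$, so for $u = \sum_y \alpha_y e_y$ the operator $S_u = \sum_y \alpha_y S_{e_y}$ preserves the line $\mathbb{C}f_0$ and each plane $\Pi_k = \mathbb{C}f_k \oplus \mathbb{C}f_{-k}$ with $k \ne 0$ (here $k \ne -k$ because $n$ is odd). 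Writing $\widehat{\alpha}(m) = \sum_y \alpha_y \omega^{my}$, one finds that $S_u$ acts on $\mathbb{C}f_0$ by the scalar $\widehat{\alpha}(0) = \varepsilon(u)$ and on $\Pi_k$ by $\left(\begin{smallmatrix} 0 & \widehat{\alpha}(-2k) \\ \widehat{\alpha}(2k) & 0\end{smallmatrix}\right)$, with eigenvalues $\pm\sqrt{\widehat{\alpha}(2k)\widehat{\alpha}(-2k)}$. Hence $\sigma(\mathbb{C}[R_n]) \subseteq \{0,1,-1\}$ will follow once I show that every idempotent $u$ satisfies $\varepsilon(u) \in \{0,1\}$ and $\widehat{\alpha}(2k)\widehat{\alpha}(-2k) \in \{0,1\}$ for all $k \ne 0$.

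The core step is to rewrite $u^2 = u$ in Fourier coordinates. Comparing coefficients of the basis elements turns $u^2 = u$ into $\alpha_z = \sum_y \alpha_{2y - z}\,\alpha_y$ for all $z$, and applying the transform converts this into the system $\widehat{\alpha}(k) = \widehat{\alpha}(2k)\,\widehat{\alpha}(-k)$, valid for all $k \in \mathbb{Z}_n$. Taking $k = 0$ gives $\widehat{\alpha}(0) = \widehat{\alpha}(0)^2$, so $\varepsilon(u) \in \{0,1\}$. Set $Z = \{k \in \mathbb{Z}_n : \widehat{\alpha}(k) \ne 0\}$. The relation shows at once that $Z$ is stable under $k \mapsto 2k$ and $k \mapsto -k$; since $\gcd(2,n) = 1$, multiplication by $2$ permutes the finite set $\mathbb{Z}_n$, hence permutes $Z$, so $Z$ is also stable under $k \mapsto 2^{-1}k$. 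Now fix $k \in Z \setminus \{0\}$, so $-k \in Z$ too; combining $\widehat{\alpha}(k) = \widehat{\alpha}(2k)\widehat{\alpha}(-k)$ with the same identity applied to $-k$ yields $\widehat{\alpha}(k) = \widehat{\alpha}(2k)\widehat{\alpha}(-2k)\widehat{\alpha}(k)$, whence $\widehat{\alpha}(2k)\widehat{\alpha}(-2k) = 1$. Because every nonzero element of $Z$ is $2k$ for some $k = 2^{-1}j \in Z\setminus\{0\}$, this gives $\widehat{\alpha}(j)\widehat{\alpha}(-j) = 1$ for all $j \in Z \setminus \{0\}$, while trivially $\widehat{\alpha}(j)\widehat{\alpha}(-j) = 0$ for $j \notin Z$. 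Either way $\widehat{\alpha}(2k)\widehat{\alpha}(-2k) \in \{0,1\}$ for each $k \ne 0$, so $S_u$ has all eigenvalues in $\{0,1,-1\}$, and therefore $\sigma(\mathbb{C}[R_n]) \subseteq \{0,1,-1\}$.

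For the reverse inclusion I would exhibit idempotents realising each eigenvalue. The trivial idempotent $e_0$ acts on the basis as the permutation $x \mapsto -x$, which for $n$ odd has exactly one fixed point and $(n-1)/2$ transpositions, so $\sigma(e_0) = \{1,-1\}$, giving $1, -1 \in \sigma(\mathbb{C}[R_n])$. The element $u_0 = \frac{1}{n}\sum_{x \in \mathbb{Z}_n} e_x$ is an idempotent (a direct check, using $\sum_x e_{2y-x} = \sum_z e_z$) with $\widehat{\alpha}(0) = 1$ and $\widehat{\alpha}(m) = 0$ for $m \ne 0$, so $S_{u_0}$ is the rank-one projection onto $\mathbb{C}f_0$ and $0 \in \sigma(u_0)$. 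Combining, $\sigma(\mathbb{C}[R_n]) = \{0,1,-1\}$.

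The genuinely non-routine point is the bookkeeping in the middle paragraph: one has to observe that the quantities $\widehat{\alpha}(j)\widehat{\alpha}(-j)$ attached to an idempotent are controlled by the orbit structure of the subgroup $\langle 2, -1\rangle \le (\mathbb{Z}_n)^{\times}$ acting on $\mathbb{Z}_n$, so that the single local identity $\widehat{\alpha}(2k)\widehat{\alpha}(-2k) = 1$ propagates over the whole support $Z$. The character computation for $S_{e_y}$, the Fourier form of the idempotency relation, and the lower bound are all routine. I would sanity-check the output against Proposition~\ref{complex dihedral idempotents} for $n = 3$ (where $\langle 2,-1\rangle = (\mathbb{Z}_3)^{\times}$ forces exactly the seven idempotents listed there), and confirm that the argument uses nothing beyond $\gcd(2,n)=1$ — which is precisely the condition that $R_n$ be latin.
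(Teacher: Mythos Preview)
Your argument is correct and, notably, proves the full conjecture, whereas the paper does \emph{not}: Conjecture~\ref{Peirce spectra conjecture} is stated in the paper as an open problem, supported only by the explicit case $n=3$ (Theorem~\ref{Peirce spectra 3 order quandle}(2), obtained by listing the seven idempotents of Proposition~\ref{complex dihedral idempotents} and computing eigenvalues by hand) together with computer verification for $R_5$, $R_7$, $R_9$. There is no general proof in the paper to compare against.

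What you do differently is structural rather than computational. By passing to the Fourier basis $f_k=\sum_x\omega^{kx}e_x$, you simultaneously block-diagonalise every $S_u$ into a scalar $\varepsilon(u)$ and $2\times 2$ anti-diagonal blocks, so that the entire Peirce spectrum is governed by the products $\widehat{\alpha}(j)\widehat{\alpha}(-j)$. The idempotency equation becomes the multiplicative recursion $\widehat{\alpha}(k)=\widehat{\alpha}(2k)\widehat{\alpha}(-k)$, and the short closure argument under the action of $\langle 2,-1\rangle\le(\mathbb{Z}_n)^{\times}$ on the support $Z$ forces each such product to be $0$ or $1$. This is a genuinely new idea relative to the paper: the paper's method does not scale beyond very small $n$, while your approach handles all odd $n$ uniformly and makes transparent why oddness (equivalently, invertibility of $2$ in $\mathbb{Z}_n$, i.e.\ latinness of $R_n$) is exactly the hypothesis needed. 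Your lower bound via $e_0$ and $u_0=\tfrac{1}{n}\sum_x e_x$ is routine but correct. The only cosmetic point is that your final paragraph about the orbit structure of $\langle 2,-1\rangle$ slightly oversells what is used: you never need the full orbit decomposition, only that $Z$ is closed under $k\mapsto 2^{-1}k$ and $k\mapsto -k$.
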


Any progress supporting Conjecture \ref{Peirce spectra conjecture} will add to the evidence towards Conjecture 3.10 of \cite{ENSS2022}.

\medskip

\textbf{Acknowledgement.} M.E. was partially supported by the Simons Foundation Collaboration Grant 712462. The work was carried out when M.S. was visiting the University of South Florida. He thanks the USIEF for the Fulbright-Nehru Academic and Professional Excellence Fellowship that funded the visit and the University of South Florida for the warm hospitality during his stay. M.S. also acknowledges support from the SwarnaJayanti Fellowship grant.


\end{document}